\numberwithin{equation}{section}
\theoremstyle{plain}
\numberwithin{equation}{section}
\newtheorem{theorem}[equation]{Theorem}
\newtheorem{corollary}[equation]{Corollary}
\newtheorem{proposition}[equation]{Proposition}
\newtheorem{lemma}[equation]{Lemma}
\theoremstyle{definition}
\newtheorem*{definition}{Definition}
\newtheorem*{notation}{Notation}
\theoremstyle{definition}
\newtheorem{remark}[equation]{Remark}
\newtheorem*{conjecture}{Conjecture}
\newcommand{\C}{\mathscr{C}}
\newcommand{\M}{\mathscr{M}}
\newcommand\Aut{\operatorname{Aut}}
\newcommand\Ext{\operatorname{Ext}}
\newcommand\cx{\operatorname{cx}}
\newcommand\op{\operatorname{op}}
\newcommand\ot{\otimes}
\renewcommand\mod{\operatorname{mod}}
\newcommand\Hom{\operatorname{Hom}}
\newcommand\stHom{\operatorname{\underline{Hom}}}
\newcommand\Hoch{\operatorname{HH}}
\newcommand\VC{V_{\C}}
\newcommand\VM{V_{\M}}
\newcommand\unit{\mathbf{1}}
\DeclareMathOperator{\Ker}{Ker}
\newcommand{\DOT}{\setlength{\unitlength}{1pt}\begin{picture}(2.5,2)(1,1)\put(2,3){\circle*{2}}\end{picture}}
\newcommand{\bu}{\DOT}
\newcommand{\Coh}{\operatorname{H}\nolimits}
\newcommand{\Ho}{\operatorname{\Coh^{\bu}}\nolimits}
\newcommand{\Maxspec}{\operatorname{MaxSpec}\nolimits}
\newcommand{\Spec}{\operatorname{Spec}\nolimits}
\newcommand{\Supp}{\operatorname{Supp}\nolimits}
\newcommand{\m}{\mathfrak{m}}
\newcommand{\p}{\mathfrak{p}}
\newcommand{\q}{\mathfrak{q}}
\def\blx@maxline{77}
\begin{document}
\title[The tensor product property]
{Support varieties for finite tensor categories: the tensor product property}

\author{Petter Andreas Bergh, Julia Yael Plavnik, Sarah Witherspoon}

\address{Petter Andreas Bergh \\ Institutt for matematiske fag \\
NTNU \\ N-7491 Trondheim \\ Norway} \email{petter.bergh@ntnu.no}
\address{Julia Yael Plavnik \\ Department of Mathematics \\ Indiana University \\ Bloomington \\ Indiana 47405 \\ USA\\ \& Fachbereich Mathematik\\ Universit\"at Hamburg\\ Hamburg  20146\\ Germany}
\email{jplavnik@iu.edu}
\address{Sarah Witherspoon \\ Department of Mathematics \\ Texas A \& M University \\ College Station \\ Texas 77843 \\ USA}
\email{sjw@tamu.edu}
\subjclass[2020]{16E40, 16T05, 18M05, 18M15}
\keywords{finite tensor categories; support varieties; tensor product property}
\date{5 June 2024}     %  REVISED from version of 14 March 2024...

\begin{abstract}
We show that in a finite tensor category, the tensor product property holds for support varieties if and only if 
it holds between indecomposable periodic objects.
We apply this result
to deduce the tensor product property for a large class of categories,
those of modules for skew group algebras formed by
exterior algebras with certain finite group actions.
These include 
the symmetric finite tensor categories over algebraically closed 
fields of characteristic zero, thus giving a new proof of 
the tensor product property for these categories. 
\end{abstract}

\maketitle

\section{Introduction}

Given a finite tensor category $\C$, one can attach a support variety $\VC (X)$ to each object $X$, using the spectrum of the cohomology ring. It has been conjectured by Etingof and Ostrik that every finite tensor category has finitely generated cohomology. As shown in our paper~\cite{BPW}, whenever this holds, the support varieties encode homological properties of the objects, in much the same way as do cohomological support varieties over group algebras, more general cocommutative Hopf algebras, and commutative complete intersection rings.
A parallel development of support varieties for module categories over
tensor triangulated categories, with complementary results, is given
by Buan, Krause, Snashall, and Solberg~\cite{BKSS}. 

When does the tensor product property hold for support varieties? That is, what conditions - if any - will guarantee that
$$\VC(X \ot Y) = \VC(X) \cap \VC(Y)$$
for all objects $X,Y \in \C$? This property always holds for support varieties over group algebras of finite groups and more generally finite dimensional cocommutative Hopf algebras~\cite{FriedlanderPevtsova} as well as a number of other 
Hopf algebras (e.g.~\cite{NVY,NegronPevtsova,PevtsovaWitherspoon2}). 
There are classes of examples for which it does not hold~\cite{BensonWitherspoon,BPW2,PlavnikWitherspoon}.
One reason why one would seek such a property is to classify thick tensor ideals in the stable category; the tensor product
property is used in classifying such thick tensor ideals in a number of settings.
Nakano, Vashaw, and Yakimov~\cite{NVY} proposed a modified version of
this tensor product property that may also be used, and is known to hold
more generally, but we will not pursue that direction here.

Our main theorem (Theorem~\ref{thm:main}) states
that when $\C$ is braided, the tensor product property holds for all objects if and only if it holds between \emph{indecomposable periodic} objects. In other words, we show that if $\VC(X \ot Y) = \VC(X) \cap \VC(Y)$ for all indecomposable periodic $X,Y \in \C$, then the tensor product property holds for all objects. Thus the question of whether the tensor product property holds reduces to indecomposable periodic objects, or, equivalently, to indecomposable objects of complexity one. We prove this reduction in the more general setting of a module category over $\C$.
To the best of our knowledge, this reduction to complexity one is new;
we are not aware of such an approach to prove a tensor product property 
in any setting in the literature.

A refinement of our main theorem is Theorem~\ref{thm:mainalternative},
stating that it is sufficient to show
for each pair of indecomposable periodic objects whose support varieties coincide,
that their tensor product (or module product) is not projective.
In some settings, there are representation theoretic tools
strong enough to check this condition directly. 

We illustrate the utility of our main theorems by verifying the
tensor product property 
for some braided categories of modules over skew group algebras.
This involves a careful comparison to modules for a subalgebra 
and some periodic cyclic modules having irreducible support varieties. 
This method is reminiscent of rank varieties, in particular those 
in~\cite{AAH,BensonErdmannHolloway, BerghErdmann, PevtsovaWitherspoon},
and indeed the theory of rank varieties could be developed further
to apply here.
We choose instead to develop only what is needed
to demonstrate the tensor product property for these examples
as a consequence of our main theorems.
As a special case, we give a new approach to the tensor
product property for any  
symmetric finite tensor category 
over an algebraically closed field of characteristic zero,
relying on Deligne's classification: 
these categories are equivalent to those of finitely generated modules 
of certain skew group algebras over exterior algebras.
We thus recover a result of  
Drupieski and Kujawa~\cite{DK}, namely
the tensor product property
for finite dimensional cocommutative Hopf superalgebras in characteristic~0. 
By contrast, %in odd characteristic not dividing the order of the group, 
our result is largely orthogonal to
that of Benson, Iyengar, Krause, and Pevtsova~\cite{BIKP}, 
the tensor product property for unipotent 
finite dimensional cocommutative Hopf superalgebras in odd characteristic,
since our skew group algebras are typically not unipotent
and we assume the characteristic does not divide the order of the group.
%The relevant categories of modules are equivalent.
%The Hopf algebras themselves differ by extending
%(i.e.~bosonizing) by a group of order 2.

\subsection*{Acknowledgments}
P.\ A.\ Bergh would like to thank the organizers of the Representation Theory program hosted by the Centre for Advanced Study at The Norwegian Academy of Science and Letters, where he spent parts of fall 2022. J.Y.\ Plavnik was partially supported by NSF grant DMS-2146392 and by Simons Foundation Award 889000 as part of the Simons Collaboration on Global Categorical Symmetries. J.P. would like to thank the hospitality
and excellent working conditions at the Department of Mathematics at Universit\"at Hamburg, where she has carried out part of this research as an Experienced Fellow of the Alexander von Humboldt Foundation. S.J.\ Witherspoon was partially supported by NSF grants 1665286 and 2001163.

\section{Preliminaries}\label{sec:prelim}

Let us fix from the very beginning the categories that we will be working with throughout the whole paper. We follow the definitions and conventions from the book \cite{EGNO}.

\begin{notation}
We fix a field $k$ -- not necessarily algebraically closed -- together with a finite tensor $k$-category $\left ( \C, \ot, \unit \right )$ and an exact left module category $\left ( \M, \ast \right )$ over $\C$. Furthermore, we make the assumptions that $\M$ has a finite set of isomorphism classes of simple objects.
\end{notation}

Thus $\C$ and $\M$ are both locally finite $k$-linear abelian categories, and $\C$ has a finite set of isomorphism classes of simple objects, each of which admits a projective cover. Moreover, there are associative (up to functorial isomorphisms) bifunctors 
$$\ot: \C\times \C \rightarrow \C \hspace{3mm} \text{and}  \hspace{3mm} \ast: \C\times \M \rightarrow \M$$ 
called the tensor product and the module product, which are compatible with the abelian structures of the categories, together with a unit object $\unit \in \C$ (with respect to both $\ot$ and $\ast$) which is simple as an object of $\C$. Furthermore, the bifunctor $\ast$ is exact in the first argument, and whenever $P$ is a projective object in $\C$, then $P \ast M$ is projective in $\M$ for all $M \in \M$. Finally, the category $\C$ is rigid, meaning that all objects have left and right duals.

\begin{remark}\label{rem:properties}
(1) Since $\C$ is rigid, the tensor product $\ot$ is biexact, by \cite[Proposition 4.2.1]{EGNO}. Moreover, by \cite[Proposition 4.2.12]{EGNO}, the collection of projective objects forms an ideal of $\C$; the tensor product between a projective object and any other object is again projective. 

(2) It follows from \cite[Proposition 7.1.6]{EGNO} that the bifunctor $\ast$ is also exact in the second argument (and hence biexact), and that whenever $Q$ is a projective object in $\M$, then $X \ast Q$ is projective in $\M$ for all $X \in \C$.

(3) Since we have assumed that the module category $\M$ also has a finite set of isomorphism classes of simple objects, this category is finite, like $\C$. This implies that all the objects of both $\C$ and $\M$ admit projective covers. Namely, as explained in \cite[Section 1.8]{EGNO}, each of the categories is equivalent to the category of finitely generated left modules over some finite dimensional $k$-algebra. Using projective covers, we can construct a minimal projective resolution for any given object, and this resolution is unique up to isomorphism. 

(4) By \cite[Corollary 7.6.4]{EGNO}, both $\C$ and $\M$ are quasi-Frobenius, that is, the projective objects are precisely the injective objects.

(5) The category $\C$ is trivially a left module category over itself, with the tensor product as the module product. Therefore everything we develop and prove for objects of $\M$ holds for objects of $\C$.

(6) Since the unit object $\unit$ is simple, the $k$-algebra $\Hom_{\C}( \unit, \unit )$ is a division ring, that is, all the nonzero elements are invertible. This ring is in fact commutative (see the paragraphs following this remark), and therefore a finite field extension of $k$. In particular, when $k$ is algebraically closed, then $\Hom_{\C}( \unit, \unit ) = k$.

(7) We refer to \cite[Section 2]{BPW} for an overview of some of the homological properties and techniques for finite tensor categories that we use throughout. Almost all the results and concepts carry over to $\M$ as well.
\end{remark}

There are many important examples of tensor categories in which the tensor product is not commutative. However, in our main results, we need this property, both the standard and a stronger version. The tensor category $\C$ is called {\em braided} if for all objects $X,Y \in \C$, there are functorial isomorphisms 
$$X\ot Y \xrightarrow{b_{X,Y}} Y\ot X$$
that satisfy the hexagonal identities defined in \cite[Definition 8.1.1]{EGNO}. If, in addition, these braiding isomorphisms satisfy
$$b_{Y,X} \circ b_{X,Y} = 1_{X\ot Y}$$
for all objects $X$ and $Y$, then $\C$ is {\em symmetric}. An example of the latter is the category of finitely generated left modules over a group algebra. However, in general, if $H$ is a finite dimensional Hopf algebra, then the category $\mod H$ of finitely generated left $H$-modules is a finite tensor category that is not necessarily braided.
 
Given objects $M,N \in \M$, we denote by $\Ext_{\M}^*(M,N)$ the graded $k$-vector space $\oplus_{n=0}^{\infty} \Ext_{\M}^n(M,N)$. The module product $- \ast M$ induces a homomorphism
$$\Ext_{\C}^*( \unit, \unit ) \xrightarrow{\varphi_M} \Ext_{\M}^*(M,M)$$
of graded $k$-algebras, making $\Ext_{\M}^*(M,N)$ both a left and a right module over the cohomology algebra $\Ext_{\C}^*( \unit, \unit )$, via $\varphi_N$ and $\varphi_M$ followed by Yoneda composition. In particular, for objects $X,Y \in \C$, the left and right scalar actions of $\Ext_{\C}^*( \unit, \unit )$ on $\Ext_{\C}^*(X,Y)$ are induced by the tensor products $- \ot Y$ and $- \ot X$, respectively, followed by Yoneda composition. However, not only is the algebra $\Ext_{\C}^*( \unit, \unit )$ graded-commutative by \cite[Theorem 1.7]{SA}, the following lemma and its corollary show that for objects $M,N \in \M$, the left and the right scalar actions of $\Ext_{\C}^*( \unit, \unit )$ on $\Ext_{\M}^*(M,N)$ coincide up to a sign, when we only consider homogeneous elements. The proof is a straightforward adaptation of the proof of \cite[Theorem 1.1]{SS04}, and we omit it for brevity. We use the symbol $\circ$ to denote Yoneda composition, as well as ordinary composition of maps.

\begin{lemma}\label{lem:leftright}
Given any objects $X,Y \in \C$, $M,N \in \M$, integers $m,n \ge 0$ and elements $\eta \in \Ext_{\C}^m(X,Y)$ and $\theta \in \Ext_{\M}^n(M,N)$, the equality
$$( \eta \ast N ) \circ ( X \ast \theta ) = (-1)^{mn} ( Y \ast \theta ) \circ ( \eta \ast M )$$
holds in $\Ext_{\M}^{m+n}( X \ast M, Y \ast N )$.
\end{lemma}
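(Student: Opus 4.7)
My plan is to adapt the argument of \cite[Theorem 1.1]{SS04} to the module-category setting via a standard double complex construction. I fix projective resolutions $(P_\bullet, d_\bullet)$ of $X$ in $\C$ and $(Q_\bullet, \delta_\bullet)$ of $M$ in $\M$, and represent $\eta$ by a cocycle $\hat\eta \colon P_m \to Y$ with $\hat\eta \circ d_{m+1} = 0$ and $\theta$ by $\hat\theta \colon Q_n \to N$ with $\hat\theta \circ \delta_{n+1} = 0$. By the biexactness of $\ast$ (Remark~\ref{rem:properties}(1)--(2)) together with the preservation of projectives stated in the Notation, the complexes $P_\bullet \ast M$, $X \ast Q_\bullet$, $P_\bullet \ast N$, and $Y \ast Q_\bullet$ are all projective resolutions of the corresponding tensor products in $\M$, and the totalization $T_\bullet = \operatorname{Tot}(P_\bullet \ast Q_\bullet)$ with the Koszul-signed differential is a projective resolution of $X \ast M$ in $\M$.

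Next I compute both sides of the claimed identity as elements of $\Ext_\M^{m+n}(X \ast M, Y \ast N)$ using these resolutions. For the left-hand side $(\eta \ast N) \circ (X \ast \theta)$, I represent $X \ast \theta$ on the resolution $X \ast Q_\bullet$ by $X \ast \hat\theta$, and $\eta \ast N$ on the resolution $P_\bullet \ast N$ by $\hat\eta \ast N$; after passing to the common refinement $T_\bullet$ via comparison chain maps, the Yoneda composite restricts on the summand $P_m \ast Q_n$ of $T_{m+n}$ to $\hat\eta \ast \hat\theta$. The analogous computation for $(Y \ast \theta) \circ (\eta \ast M)$ uses the resolutions $P_\bullet \ast M$ and $Y \ast Q_\bullet$, and the Yoneda composite again restricts on $P_m \ast Q_n$ to $\hat\eta \ast \hat\theta$, but via the opposite traversal (first vertical, then horizontal) of the bicomplex $P_\bullet \ast Q_\bullet$.

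The two traversals around the bicomplex differ by exactly the Koszul sign $(-1)^{mn}$ incurred when interchanging a degree-$m$ horizontal chain map with a degree-$n$ vertical chain map in the totalization of a biexact bifunctor of complexes. Combining these computations yields the identity stated in the lemma.

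The main obstacle I anticipate is the sign bookkeeping: rigorously identifying the representative of each Yoneda composite as the single map $\hat\eta \ast \hat\theta$ on the summand $P_m \ast Q_n$, and verifying that the comparison chain maps between $T_\bullet$ and the single-factor resolutions $P_\bullet \ast N$, $X \ast Q_\bullet$, $P_\bullet \ast M$, $Y \ast Q_\bullet$ contribute no extra signs beyond the Koszul rule. This is a patient but formal computation that closely parallels the Hopf-algebra case treated in \cite{SS04}, with $\ot$ replaced by $\ast$ throughout.
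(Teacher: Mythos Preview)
Your approach is sound and would yield a correct proof, but it takes a different route from the paper's. You work with the totalization of the bicomplex $P_\bullet \ast Q_\bullet$ and extract the Koszul sign $(-1)^{mn}$ from comparing the two edge-traversals; the paper instead argues by case analysis on $(m,n)$. For $m=0$ or $n=0$ it lifts morphisms explicitly along the single-factor resolutions and invokes bifunctoriality of $\ast$ directly. For $m,n \geq 1$ it first treats the base case $m=n=1$ by representing $\eta$ and $\theta$ as short exact sequences, assembling the resulting $3\times 3$ commutative diagram with exact rows and columns, and applying the $3\times 3$ splicing lemma \cite[Lemma VIII.3.1]{MacLane} to obtain the sign $-1$; the general case then follows by decomposing $\eta$ and $\theta$ into Yoneda products of degree-$1$ extensions and iterating the base case $mn$ times. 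Your double-complex argument is more uniform and avoids the case split, at the price of the sign bookkeeping you correctly flag as the main burden; the paper's argument is more elementary, never needs the total complex, and makes the provenance of each sign completely explicit via the splicing lemma.
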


Specializing to the case when $X = Y = \unit$, we obtain what we are after, recorded in the following corollary. Note also that when we specialize even further, by taking $\M = \C$ and $M=N= \unit$, we recover the graded-commutativity of $\Ext_{\C}^*( \unit, \unit )$.

\begin{corollary}\label{lem:centralaction}
Given any objects $M,N \in \M$ and elements $\eta \in \Ext_{\C}^m( \unit, \unit )$ and $\theta \in \Ext_{\M}^n(M,N)$, the equality
$$\eta \cdot \theta = (-1)^{mn} \theta \cdot \eta$$
holds.
\end{corollary}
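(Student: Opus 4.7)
The plan is to deduce this immediately by specializing Lemma~\ref{lem:leftright} to $X = Y = \unit$. First I would unwind the definitions of the two actions. By construction, the left action of $\Ext_{\C}^*(\unit,\unit)$ on $\Ext_{\M}^*(M,N)$ sends $\eta$ to $\varphi_N(\eta) = \eta \ast N$ (viewed in $\Ext_{\M}^m(N,N)$ via the unit isomorphism $l_N \colon \unit \ast N \xrightarrow{\sim} N$), and the action on $\theta$ is given by Yoneda composition: $\eta \cdot \theta = (\eta \ast N) \circ \theta$. Symmetrically, the right action is $\theta \cdot \eta = \theta \circ (\eta \ast M)$, using $l_M$ on the other side.

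Next I would invoke Lemma~\ref{lem:leftright} with $X = Y = \unit$. It yields the equality
$$( \eta \ast N ) \circ ( \unit \ast \theta ) \; = \; (-1)^{mn} ( \unit \ast \theta ) \circ ( \eta \ast M )$$
in $\Ext_{\M}^{m+n}( \unit \ast M, \unit \ast N )$. To translate this into a statement about $\eta \cdot \theta$ and $\theta \cdot \eta$, I would invoke naturality of the unit isomorphism: the diagram expressing $l_N \circ (\unit \ast \theta) = \theta \circ l_M$ commutes (and extends degreewise to Ext classes represented by exact sequences), so under the canonical identifications $\unit \ast M \cong M$ and $\unit \ast N \cong N$ the class $\unit \ast \theta$ is identified with $\theta$. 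Substituting this into the displayed equation gives precisely $\eta \cdot \theta = (-1)^{mn}\, \theta \cdot \eta$, as required.

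There is no genuine obstacle here; the only thing to be careful about is the bookkeeping of the unit isomorphism $l$ when passing between $\Ext_{\M}^*(\unit \ast M, \unit \ast N)$ and $\Ext_{\M}^*(M,N)$, and this is handled by the naturality of $l$ in the module category. The author's remark after the lemma, that specializing further to $\M = \C$ and $M = N = \unit$ recovers the graded-commutativity of $\Ext_{\C}^*(\unit,\unit)$, is consistent with this interpretation.
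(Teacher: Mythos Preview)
Your proposal is correct and follows exactly the same approach as the paper: the corollary is obtained by specializing Lemma~\ref{lem:leftright} to $X = Y = \unit$, with the unit isomorphisms $l_M, l_N$ absorbing $\unit \ast \theta$ into $\theta$. The paper does not spell out the bookkeeping with $l$ that you describe, but your treatment of it is accurate.
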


The algebra $\Ext_{\C}^*( \unit, \unit )$ is the \emph{cohomology ring} $\Coh^*( \C )$ of $\C$. It is at the center of the following conjecture from \cite{EO}, a conjecture which is still open:
\begin{conjecture}
\sloppy The cohomology ring $\Coh^*( \C )$ is finitely generated, and $\Ext_{\C}^*(X,X)$ is a finitely generated $\Coh^*( \C )$-module for all objects $X \in \C$.
\end{conjecture}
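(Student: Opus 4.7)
The plan has to acknowledge up front that this is the Etingof–Ostrik conjecture, which remains open in full generality; what I can propose is a strategy, modeled on the cases where finite generation is known. The overall shape of the argument would be: first prove that $\Coh^*(\C)$ is a finitely generated graded $k$-algebra, and then use this, together with Corollary \ref{lem:centralaction} (so that the action of $\Coh^*(\C)$ on $\Ext_{\C}^*(X,X)$ is graded-central), to deduce that $\Ext_{\C}^*(X,X)$ is finitely generated as a module. The second reduction is the easier half: once $\Coh^*(\C)$ is known to be a finitely generated commutative-up-to-sign algebra, one fixes a minimal projective resolution $P_{\bullet} \to X$ (which exists by Remark \ref{rem:properties}(3)), constructs a cohomology spectral sequence of the form $\Ext_{\C}^*(\unit,\unit) \otimes \Ext_{\C}^*(X,X) \Rightarrow \Ext_{\C}^*(X,X)$ arising from tensoring $P_{\bullet}$ with a resolution of $\unit$, and shows that the abutment is Noetherian because the $E_2$-page is. This step is parallel to the Evens–Venkov argument for finite groups.

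The main task is therefore finite generation of $\Coh^*(\C)$. The first step of my plan is to treat the case in which $\C$ admits a fiber functor to $\vect$, so that by Tannakian reconstruction $\C \simeq \Rep(H)$ for a finite-dimensional (quasi-)Hopf algebra $H$, reducing the claim to finite generation of $\Hoch^*(H,k)$. In the cocommutative case this is the Friedlander–Suslin theorem, proved by exhibiting a polynomial subalgebra of even-degree classes (constructed from the Frobenius / restriction map in positive characteristic, or from the primitive elements of $H^*$ in characteristic zero) over which the total cohomology is finite. The second step is to try to imitate this construction in $\C$ itself, without the fiber functor. The candidates for the polynomial generators would be even-degree extensions $\unit \to \Omega^{2n}\unit \to \cdots \to \unit$ built by iterating a single periodicity class; one would attempt to produce these via a filtration of $\C$ whose associated graded category is semisimple or cocommutative, then lift the Friedlander–Suslin generators through the spectral sequence of the filtration.

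The key technical step, and the main obstacle, is producing enough such classes in the absence of any fiber functor. Concretely, one needs either (i) a universal construction of ``Bockstein-like'' or ``restriction-like'' operations on $\Coh^*(\C)$ that is intrinsic to the tensor-categorical data and reduces to the Friedlander–Suslin classes in $\Rep(H)$, or (ii) a May-type spectral sequence from a formal deformation of $\C$ converging to $\Coh^*(\C)$, whose $E_2$-page one can prove is Noetherian. The first avenue asks for a new functorial construction on finite tensor categories; the second needs a general method of deforming $\C$ to a category whose cohomology is computable (e.g.\ one arising from a Nichols-algebra-like PBW structure), and a convergence/finiteness result for the associated spectral sequence. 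Either route would handle the general case; the obstacle in each is exactly the one that has kept the conjecture open, namely that without a fiber functor one lacks a direct handle on cocycles, and must instead extract finite generation from the abstract tensor-categorical structure alone.

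Once finite generation of $\Coh^*(\C)$ has been established by one of these routes, the last step is to return to the module statement for $\Ext_{\C}^*(X,X)$, constructing for each $X$ the needed spectral sequence as indicated above, and invoking Corollary \ref{lem:centralaction} to ensure that graded-commutativity is not an obstruction to standard Noetherian arguments. This final step is routine relative to the previous ones and essentially follows the classical Evens template.
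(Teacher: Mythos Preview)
The paper does not prove this statement. It is presented explicitly as ``a conjecture which is still open'' (the Etingof--Ostrik conjecture from \cite{EO}), and the paper merely states it in order to motivate the definition of the finiteness condition \textbf{Fg}, which is then taken as a \emph{hypothesis} in all the main results. So there is no proof in the paper to compare your proposal against, and you are right to flag at the outset that no complete argument is currently known.

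That said, one part of your strategy is genuinely mistaken rather than merely incomplete. You describe the module statement for $\Ext_{\C}^*(X,X)$ as ``the easier half'' and claim it follows from finite generation of $\Coh^*(\C)$ via a spectral sequence with $E_2$-page $\Ext_{\C}^*(\unit,\unit)\otimes\Ext_{\C}^*(X,X)$ converging to $\Ext_{\C}^*(X,X)$. This is circular: the very module whose finite generation you want appears already on the $E_2$-page, so Noetherianity of $E_2$ presupposes what you are trying to prove. In the classical settings (Evens, Friedlander--Suslin) the module statement is \emph{not} a formal consequence of finite generation of the cohomology ring; it is proved alongside it, typically by running the same transfer/norm or filtration argument with coefficients. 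The two halves of the conjecture are genuinely independent conditions, which is precisely why the paper packages them together as a single hypothesis \textbf{Fg}. Your outline of approaches to the first half (Tannakian reduction, Friedlander--Suslin classes, May-type spectral sequences from deformations) is a fair summary of the known toolkit, and you correctly identify the absence of a fiber functor as the main obstruction; but you should not present the second half as routine once the first is done.
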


If the characteristic of the ground field $k$ is two, then graded-commutativity is the same as ordinary commutativity. If, on the other hand, the characteristic of $k$ is not two, then the even part of the cohomology ring $\Coh^*( \C )$ is  commutative, and the homogenous elements of odd degrees square to zero. When we work with support varieties, nilpotent elements in the ambient commutative ring are redundant, and this motivates the first part of the following definition.

\begin{definition}
(1) We define
$$\Ho ( \C ) = \left \{ 
\begin{array}{ll}
\Coh^*( \C ) & \text{if the characteristic of $k$ is two,} \\
\Coh^{2*}( \C ) & \text{if not.}
\end{array} 
\right.$$

(2) We say that the finite tensor category $\C$ satisfies the \emph{finiteness condition} \textbf{Fg} if the cohomology ring $\Coh^*( \C )$ is finitely generated, and $\Ext_{\C}^*(X,X)$ is a finitely generated $\Coh^*( \C )$-module for all objects $X \in \C$.
\end{definition}

As explained in \cite[Remark 3.5]{BPW}, the finiteness condition \textbf{Fg} and the conjecture can be stated in terms of $\Ho ( \C )$ instead of $\Coh^*( \C )$. Namely, the condition \textbf{Fg} holds for $\C$ if and only if $\Ho ( \C )$ is finitely generated, and $\Ext_{\C}^*(X,X)$ is a finitely generated $\Ho ( \C )$-module for every object $X \in \C$. Note also that when \textbf{Fg} holds for $\C$, then for all objects $X,Y \in \C$, the $\Coh^* ( \C )$-module $\Ext_{\C}^*(X,Y)$ is finitely generated, and not just the two modules $\Ext_{\C}^*(X,X)$ and $\Ext_{\C}^*(Y,Y)$. This follows from the simple fact that the $\Coh^* ( \C )$-module $\Ext_{\C}^*(X \oplus Y,X \oplus Y)$ is finitely generated by assumption, and it has $\Ext_{\C}^*(X,Y)$ as a direct summand. 

\begin{remark}\label{rem:FGexact}
When the finiteness condition \textbf{Fg} holds for $\C$, then what about the cohomology of $\M$? It turns out that it is automatically finitely generated. Namely, by \cite[Proposition 3.5]{NP}, if \textbf{Fg} holds for $\C$, then $\Ext_{\M}^*(M,M)$ is a finitely generated $\Coh^*( \C )$-module for every object $M \in \M$. As for $\C$, this implies that for all objects $M,N \in \M$, the $\Coh^* ( \C )$-module $\Ext_{\M}^*(M,N)$ is finitely generated. Moreover, also here we may replace $\Coh^*( \C )$ with $\Ho ( \C )$.
\end{remark}

For objects $M,N \in \M$, we now define
$$I_{\M}(M,N) = \left \{ \eta \in \Ho ( \C ) \mid \eta \cdot \theta = 0 \text{ for all } \theta \in \Ext_{\M}^*(M,N) \right \} , $$
that is, the annihilator ideal of $\Ext_{\M}^*(M,N)$ in $\Ho ( \C )$. For a single object $M$ we write just $I_{\M}(M)$ instead of $I_{\M}(M,M)$. Moreover, for any ideal $I \subseteq \Ho ( \C )$, we denote by $Z(I)$ the set of maximal ideals $\m \in \Ho ( \C )$ with $I \subseteq \m$. Finally, we set $\m_0 =  \Coh^{+} (\C)$, the ideal generated by all the homogeneous elements of positive degrees in $\Ho ( \C )$. Then $\m_0$ is the unique graded maximal ideal of $\Ho ( \C )$, since $\Coh^{0} (\C)$ is a field; see Remark \ref{rem:properties}(6). Consequently, the annihilator ideal that we just defined, which is graded, must be contained in $\m_0$ whenever $\Ext_{\M}^*(M,N)$ is nonzero. 

\begin{definition}
The \emph{support variety} of an ordered pair of objects $(M,N)$ in $\M$ is
$$\VM(M,N) \stackrel{\text{def}}{=} \{ \m_0 \} \cup Z ( I_{\M} (M,N) )$$
For a single object $M \in \M$, we define its support variety as $\VM(M) = \VM(M,M)$.
\end{definition} 

In the definition, the explicit inclusion of the unique graded maximal ideal $\m_0$ has been made in order to avoid empty support varieties; if $\Ext_{\M}^*(M,N)$ is nonzero, then $\m_0$ is automatically contained in the set $Z ( I_{\M} (M,N) )$, since $I_{\M} (M,N)$ is a graded proper ideal of $\Ho ( \C )$. The support variety $\VM(M,N)$ is called \emph{trivial} if $\VM(M,N) = \{ \m_0 \}$.

\begin{remark}\label{rem:trivialvariety}
(1) When we deal with objects in the category $\C$ itself, we use the notation $I_{\C}(X,Y)$, $\VC(X,Y)$ and $\VC(X)$.

(2) We define $\VC$ as $\VC ( \unit )$; this is just the set of maximal ideals of the cohomology ring $\Ho ( \C )$. Note that $\VM(M,N) \subseteq \VC$ for all $M,N \in \M$.

(3) An important feature of support varieties -- probably the most important -- is the dimension. For objects $M,N \in \M$, the dimension of $\VM(M,N)$, denoted $\dim \VM(M,N)$, is defined to be the Krull dimension of the ring $\Ho ( \C ) / I_{\M}(M,N)$. If this dimension is zero, then the support variety is necessarily trivial. For suppose that $\VM(M,N)$ contains a maximal ideal $\m$ other than $\m_0$, and let $\m^*$ be the graded ideal of $\Ho ( \C )$ generated by all the homogeneous elements in $\m$. By \cite[Lemma 1.5.6]{BrunsHerzog}, this is a prime ideal, and so since the graded ideal $I_{\M} (M,N)$ is contained in $\m$, we see that $I_{\M} (M,N) \subseteq \m^*$ (for $\m^*$ is the unique maximal graded ideal contained in $\m$). As $\m$ is not graded, the inclusion $\m^* \subset \m$ is strict, hence the Krull dimension of $\Ho ( \C ) / I_{\M}(M,N)$ is at least $1$. Thus when $\dim \VM(M,N) = 0$, then $\VM(M,N) = \{ \m_0 \}$. However, when the finiteness condition \textbf{Fg} holds for $\M$, then the converse is also true, so that
$$\dim \VM(M,N) = 0 \Longleftrightarrow \VM(M,N) = \{ \m_0 \}$$
For in this case, if $\VM(M,N) = \{ \m_0 \}$, then if $\Ext_{\M}^*(M,N)$ is nonzero, the radical $\sqrt{I_{\M}(M,N)}$ equals $\m_0$, by \cite[Theorem 25]{Matsumura}. Consequently, the Krull dimension of $\Ho ( \C ) / I_{\M}(M,N)$ must be zero.
\end{remark}

In the following result, we collect some of the basic properties enjoyed by support varieties for objects of $\M$. For objects of $\C$, these properties were listed in \cite[Proposition 3.3]{BPW}. In that paper, we made the assumption that the ground field $k$ is algebraically closed, but that assumption was never needed. The proofs carry over to the general setting of exact module categories; some of them rely on Corollary \ref{lem:centralaction}. Only one of the properties, number (6) below, requires an argument that is special to $\M$.

\begin{proposition}\label{prop:elementaryproperties}
For objects $M, N \in \M$, the following properties hold.

\emph{(1)} $\VM(M \oplus N) = \VM(M) \cup \VM(N)$.

\emph{(2)} $\VM(M,N) \subseteq \VM(M) \cap \VM(N)$.

\emph{(3)} $\VM(M) = \cup_{i=1}^t \VM(M,S_i) = \cup_{i=1}^t \VM(S_i,M)$, where $S_1, \dots, S_t$ are all the simple objects of $\M$ \emph{(}up to isomorphism\emph{)}.

\emph{(4)} Given any short exact sequence
$$0 \to L_1 \to L_2 \to L_3 \to 0$$
in $\M$, the inclusion $\VM(L_u) \subseteq \VM(L_v) \cup \VM(L_w)$ holds whenever $\{ u,v,w \} = \{ 1,2,3 \}$.

\emph{(5)} If there is a short exact sequence
$$0 \to M \to P \to N \to 0$$
in $\M$, in which $P$ is projective, then $\VM(M) = \VM(N)$.

\emph{(6)} For every object $X \in \C$, the inclusion $\VM(X \ast M) \subseteq \VC(X)$ holds. Moreover, if the category $\C$ is braided, then $\VM(X \ast M) \subseteq \VC(X) \cap \VM(M)$.
\end{proposition}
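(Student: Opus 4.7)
The plan is to adapt the proofs of \cite[Proposition 3.3]{BPW}, which are stated for $\C$, to the module category $\M$; items (1)--(5) transfer essentially verbatim, while (6) needs a separate argument. For (1), the biproduct decomposition of $\Ext^*_{\M}(M\oplus N,M\oplus N)$ gives $I_{\M}(M\oplus N)=I_{\M}(M)\cap I_{\M}(N)$, hence the union of zero sets. For (2), if $\eta\in I_{\M}(M)$ then $\varphi_M(\eta)=\eta\cdot\id_M=0$, so $\eta\cdot\theta=\theta\circ\varphi_M(\eta)=0$ for any $\theta\in\Ext^*_{\M}(M,N)$; the symmetric claim for $N$ uses Corollary~\ref{lem:centralaction} to switch to the opposite-side action. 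Item (4) follows from the two long exact $\Ext$-sequences attached to the given short exact sequence, which force $I_{\M}(L_v)\cdot I_{\M}(L_w)\subseteq\sqrt{I_{\M}(L_u)}$; item (5) follows from the dimension-shift isomorphism $\Ext^{n+1}_{\M}(N,L)\cong\Ext^n_{\M}(M,L)$ (for $n\geq 1$ and any $L\in\M$), which equates annihilators modulo a piece of bounded degree and hence equates their radicals. Item (3) is then obtained by inducting on the length of a composition series of $M$ using (1) and (4).

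For item (6), the key observation is that the associator $(X\ot\unit)\ast M\cong X\ast M$ gives, for any $\eta\in\Ho(\C)$, the identity
\[
\eta\ast(X\ast M)=(\eta\ot X)\ast M
\]
in $\Ext^*_{\M}(X\ast M,X\ast M)$. Thus $\varphi_{X\ast M}$ factors as
\[
\Ho(\C)\xrightarrow{\varphi^{\C}_X}\Ext^*_{\C}(X,X)\xrightarrow{-\ast M}\Ext^*_{\M}(X\ast M,X\ast M),
\]
with $\varphi^{\C}_X(\eta)=\eta\cdot\id_X=\eta\ot X$. If $\eta\in I_{\C}(X)$ then $\varphi^{\C}_X(\eta)=0$, hence $\varphi_{X\ast M}(\eta)=0$ and so $\eta\in I_{\M}(X\ast M)$, giving $I_{\C}(X)\subseteq I_{\M}(X\ast M)$ and therefore $\VM(X\ast M)\subseteq\VC(X)$.

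For the braided refinement I would show additionally that $I_{\M}(M)\subseteq I_{\M}(X\ast M)$. Applying the exact functor $X\ast-$ to a representative of $\eta\ast M\in\Ext^*_{\M}(M,M)$ produces $X\ast(\eta\ast M)=(X\ot\eta)\ast M$ in $\Ext^*_{\M}(X\ast M,X\ast M)$, and this vanishes whenever $\eta\in I_{\M}(M)$. The braiding $b_{X,-}\colon X\ot-\Rightarrow-\ot X$ is a natural isomorphism of endofunctors of $\C$; applied to a projective resolution of $\unit$ it realizes $X\ot\eta$ and $\eta\ot X$ as Yoneda-equivalent extensions of $X$ by $X$. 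Consequently $(\eta\ot X)\ast M=0$, i.e.\ $\varphi_{X\ast M}(\eta)=0$, as required. The one nontrivial step I anticipate is precisely this last diagram chase, verifying that the braiding---and not merely the identity isomorphism $X\cong X$---produces a Yoneda equivalence of $X\ot\eta$ and $\eta\ot X$; this is the unique place where the braided hypothesis is essential.
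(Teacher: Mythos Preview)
Your proposal is correct and follows essentially the same route as the paper: items (1)--(5) are deferred to \cite[Proposition~3.3]{BPW}, and for (6) you use exactly the paper's factorization $\varphi_{X\ast M}=(-\ast M)\circ\varphi_X$ coming from the associator to get $I_{\C}(X)\subseteq I_{\M}(X\ast M)$, and then the naturality of the braiding to identify $(\eta\ot X)\ast M$ with $(X\ot\eta)\ast M=X\ast\varphi_M(\eta)$, yielding $I_{\M}(M)\subseteq I_{\M}(X\ast M)$. The ``nontrivial diagram chase'' you flag is precisely the one-line use of functoriality of $b_{-,X}$ that the paper records as the equality $(\eta\ot X)\ast M=(X\ot\eta)\ast M$.
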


\begin{proof}
As mentioned, only (6) needs an argument, since the proof of \cite[Proposition 3.3]{BPW} works for the rest.

The scalar action of $\Ho ( \C )$ on $\Ext_{\M}^*(X \ast M,X \ast M)$ is defined in terms of the ring homomorphism 
$$\Ho ( \C ) \xrightarrow{\varphi_{X \ast M}} \Ext_{\M}^*(X \ast M,X \ast M)$$
which in turn is induced by the module product $- \ast (X \ast M)$. Now, for every object $Y \in \C$ there is an isomorphism $Y \ast (X \ast M) \simeq  (Y \ot X) \ast M$, functorial in $Y$. Therefore, the ring homomorphism factors as the composition
$$\Ho ( \C ) \xrightarrow{\varphi_{X}} \Ext_{\C}^*(X,X) \xrightarrow{- \ast M} \Ext_{\M}^*(X \ast M,X \ast M)$$
where the ring homomorphism $\varphi_X$ is induced by the tensor product $- \ot X$. This implies $I_{\C}(X) \subseteq I_{\M}(X \ast M)$, and so the inclusion $\VM(X \ast M) \subseteq \VC(X)$ follows by definition of support varieties.

Suppose now that the category $\C$ is braided, and take any homogeneous element $\eta \in \Ho ( \C )$. By definition, for every object $Y \in \C$ there is an isomorphism $Y \ot X \to X \ot Y$, functorial in $Y$, giving
$$\varphi_{X \ast M} ( \eta ) = \eta \ast (X \ast M) = ( \eta \ot X ) \ast M = (X \ot \eta ) \ast M = X \ast ( \eta \ast M ) = X \ast \varphi_M ( \eta )$$
as elements of $\Ext_{\M}^*(X \ast M,X \ast M)$. Thus $I_{\M}(M) \subseteq I_{\M}(X \ast M)$, and the containment $\VM(X \ast M) \subseteq \VM(M)$ follows.
\end{proof}

Recall from Remark \ref{rem:properties}(3) that every object $M \in \M$ (and every object of $\C$) admits a minimal projective resolution $(P_{\bu}, d_{\bu})$, which is unique up to isomorphism. We define the $n$th \emph{syzygy} of $M$ to be the image of the morphism $d_n$, and denote it by $\Omega_{\M}^n(M)$ (or $\Omega_{\C}^n(X)$ for an object $X \in \C$).  As shown in \cite[Lemma 2.4]{BPW}, the minimal projective resolution has the property that
$$\Ext_{\M}^n(M,S) \simeq \Hom_{\M}(P_n,S) \simeq \Hom_{\M}(\Omega_{\M}^n(M),S)$$
for every $n \ge 1$ and every simple object $S \in \M$. 

Given a sequence $a_{\bu} = (a_0,a_1,a_2, \dots )$ of nonnegative real numbers, we denote by $\gamma (a_{\bu})$ its polynomial rate of growth, that is, the infimum of integers $c \ge 0$ for which there exists a number $b \in \mathbb{R}$ with $a_n \le bn^{c-1}$ for all $n \ge 0$. We now define the \emph{complexity} of the object $M$, denoted $\cx_{\M}(M)$, to be $\gamma ( \ell P_{\bu} )$, where $\ell P_n$ denotes the length of the object $P_n$. For objects of $\C$, this is not the same definition as used in \cite{BPW}, where we defined the complexity to be the rate of growth of the Frobenius-Perron dimensions of the objects of the minimal projective resolution. However, the two definitions are equivalent; there are only finitely many indecomposable projective objects of $\C$ (one for each simple object), and the two definitions just rely on attaching different sets of positive real numbers -- all at least $1$ -- to these. As explained in \cite[Remark 4.2]{BPW}, the complexity of $M$ is the same as the rate of growth of the sequence $( \dim_k \Ext_{\M}^*(M, S_1 \oplus \cdots \oplus S_t))$, where $S_1, \dots, S_t$ are all the simple objects of $\M$. Moreover, it also equals the rate of growth of the sequence whose $n$th term is the number of indecomposable summands of $P_n$.

We end this section with a result which sums up the properties that were proved in \cite{BPW} for support varieties when \textbf{Fg} holds. These properties were proved for objects in a finite tensor category, and not objects in a module category, but as for Proposition \ref{prop:elementaryproperties}, the proofs carry over to the more general setting, so we omit them. Moreover, as mentioned before Proposition \ref{prop:elementaryproperties}, the assumption we made in \cite{BPW} that $k$ be algebraically closed was never needed.

Recall first that if $\zeta$ is a nonzero homogeneous element of $\Ho ( \C )$, say of degree $n$, then it can be represented by an epimorphism $f_{\zeta} \colon \Omega_{\C}^n( \unit ) \to \unit$ (it is necessarily an epimorphism since the unit object is simple in $\C$). We denote the kernel of this morphism by $L_{\zeta}$; this object is known as \emph{Carlson's $L_{\zeta}$-object}. The module version of \cite[Theorem 5.2]{BPW} gives an inclusion $\VM( L_{\zeta} \ast M ) \subseteq Z( \zeta ) \cap \VM (M)$ for every object $M \in \M$, even without assuming that $\C$ is braided, as in Proposition \ref{prop:elementaryproperties}(6).

\begin{theorem}\label{thm:fgproperties}
If $\C$ satisfies \emph{\textbf{Fg}}, then the following hold for every object $M \in \M$.

\emph{(1)} $\cx_{\M}(M) = \gamma \left ( \dim_k \Ext_{\M}^*(M,M) \right ) = \dim \VM(M) \le \dim \Ho ( \C )$, where $\dim \Ho ( \C )$ is the Krull dimension of the cohomology ring $\Ho ( \C )$.

\emph{(2)} The object $M$ is projective if and only if $\VM (M)$ is trivial, and if and only if $\cx_{\M}(M) =0$.

\emph{(3)} $\VM( L_{\zeta} \ast M ) = Z( \zeta ) \cap \VM (M)$ for every nonzero homogeneous element $\zeta \in \Ho ( \C )$.

\emph{(4)} Given any nonempty conical subvariety $V \subseteq \VM(M)$, there exists an object $N \in \M$ with $\VM(N) = V$.

\emph{(5)} Given any nonnegative integer $c \le \cx_{\M}(M)$, there exists an object $N \in \M$ with $\cx_{\M}(N) =c$.

\emph{(6)} If $\cx_{\M}(M) \ge 1$, then there exists a short exact sequence
$$0 \to M \to K \to \Omega_{\M}^n(M) \to 0$$
for some $n \ge 0$ and some object $K \in \M$ with $\cx_{\M}(K) = \cx_{\M}(M) -1$.

\emph{(7)} Given any object $N \in \M$, the support variety $\VM(M,N)$ is trivial if and only if $\Ext_{\M}^n(M,N)=0$ for all $n \gg 0$, if and only if $\Ext_{\M}^n(M,N)=0$ for all $n \ge 1$.

\emph{(8)} If $\VM(M) = V_1 \cup V_2$ for conical subvarieties $V_1,V_2$ with $V_1 \cap V_2 = \{ \m_0 \}$, then $M \simeq M_1 \oplus M_2$ for some objects $M_1,M_2$ with $\VM(M_i) = V_i$.
\end{theorem}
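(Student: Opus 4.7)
The plan is to transport the arguments of \cite{BPW}, which established (1)--(8) for objects of $\C$, to the setting of the module category $\M$. This transport is possible because every step in those proofs relies only on four ingredients that have been collected earlier: finite generation of $\Ext_\M^*(M,N)$ as an $\Ho(\C)$-module (Remark \ref{rem:FGexact}), graded-commutativity of the scalar action (Corollary \ref{lem:centralaction}), existence and uniqueness of minimal projective resolutions in the quasi-Frobenius category $\M$ (Remark \ref{rem:properties}(3)--(4)), and biexactness of $\ast$ together with its sending $\C$-projectives to $\M$-projectives. The elementary support-variety properties used repeatedly are those of Proposition \ref{prop:elementaryproperties}. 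The paragraphs below indicate how each assertion is obtained.

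For (1), invoke the following graded-algebra fact: if $A$ is a finitely generated commutative graded $k$-algebra and $E$ a finitely generated graded $A$-module, then the polynomial growth rate of $\dim_k E_n$ equals the Krull dimension of $A/\operatorname{ann}_A(E)$, which is bounded by $\dim A$. Apply this with $A = \Ho(\C)$ and $E = \Ext_\M^*(M, S_1 \oplus \cdots \oplus S_t)$; Proposition \ref{prop:elementaryproperties}(3) identifies the annihilator, up to radicals, with $I_\M(M)$. The identification of this growth rate with $\cx_\M(M)$ follows from the isomorphism $\Ext_\M^n(M,S) \simeq \Hom_\M(P_n,S)$ and the finiteness of the set of simples. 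Assertion (2) is then immediate from (1) combined with Remark \ref{rem:trivialvariety}(3), since vanishing complexity forces the minimal projective resolution to terminate.

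The key step is (3), from which (4)--(6) are derived. The inclusion $\subseteq$ has already been noted before the theorem. For $\supseteq$, apply $-\ast M$ to the defining short exact sequence of the Carlson object $0 \to L_\zeta \to \Omega_\C^n(\unit) \to \unit \to 0$ in $\C$. This yields a short exact sequence in $\M$ to which $\Ext_\M^*(-, N)$ is applied; the resulting long exact sequence has connecting map equal to multiplication by $\zeta$ (this is precisely Lemma \ref{lem:leftright} with $\eta = \zeta$). Tracking annihilators realises every prime of $Z(\zeta) \cap \VM(M)$ as a prime of $\VM(L_\zeta \ast M)$. From (3), assertion (4) follows by iterating with homogeneous $\zeta_1,\ldots,\zeta_r$ cutting out the desired conical subvariety of $\VM(M)$; (5) is then dimension-counting via (1); and (6) is produced by choosing $\zeta$ of degree $n$ with $\dim (Z(\zeta) \cap \VM(M)) = \cx_\M(M)-1$ and splicing the corresponding $n$-fold extension against $\ast M$. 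For (7), finite generation and Nakayama equate triviality of $\VM(M,N)$ with eventual vanishing of $\Ext_\M^*(M,N)$, and syzygy-shifting combined with quasi-Frobeniusness upgrades this to vanishing for all $n \ge 1$. Finally, (8) is an idempotent-lifting argument in $\End_\M(M)$ afforded by the orthogonal decomposition $V_1 \cup V_2$ of the variety.

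The main obstacle is (3), because $L_\zeta$ is constructed in $\C$ while the target variety is measured in $\M$: one must verify that the connecting homomorphism of the long exact Ext sequence obtained from $-\ast M$ is genuinely multiplication by $\zeta$ in the $\Ho(\C)$-action on $\Ext_\M^*$. This is precisely what Corollary \ref{lem:centralaction} and Lemma \ref{lem:leftright} were set up to make automatic, and once this identification is secured, all of (4)--(8) become formal consequences in the manner of \cite{BPW}.
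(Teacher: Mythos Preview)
Your proposal is correct and takes essentially the same approach as the paper: the paper's proof consists entirely of the assertion that the arguments of \cite{BPW} carry over verbatim to the module-category setting, and you have spelled out precisely which ingredients (Remark~\ref{rem:FGexact}, Corollary~\ref{lem:centralaction}, Remark~\ref{rem:properties}, Proposition~\ref{prop:elementaryproperties}) make this transport go through. Your sketch of each item is a faithful expansion of what \cite{BPW} does, so there is nothing to contrast.
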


\section{The module product property}\label{sec:tensor}

Recall that we have fixed a field $k$ -- not necessarily algebraically closed -- together with a finite tensor $k$-category $\left ( \C, \ot, \unit \right )$ and an exact left module category $\left ( \M, \ast \right )$ over $\C$. Moreover, we have assumed that $\M$ has a finite set of isomorphism classes of simple objects.

In this section we prove the main result (Theorem~\ref{thm:main}): the question of whether the module product property holds for support varieties reduces to the question of whether it holds if we only consider indecomposable periodic objects. By a \emph{periodic} object, we mean an object $M \in \M$ with $M \simeq \Omega_{\M}^n(M)$ for some $n \ge 1$. In other words, the minimal projective resolution of $M$ is periodic of period $n$.

We start with the following result, which, together with its corollary, characterizes the indecomposable periodic objects in terms of their complexities. 

\begin{theorem}\label{thm:complexityone}
If $\C$ satisfies \emph{\textbf{Fg}}, then the following are equivalent for an object $M \in \M$:
\begin{itemize}
\item[(1)] $\cx_{\M}(M) = 1$;
\item[(2)] $\dim \VM(M) = 1$;
\item[(3)] $M$ is isomorphic to $N \oplus Q$ for some nonzero periodic object $N$ and projective object $Q$.
\end{itemize}
\end{theorem}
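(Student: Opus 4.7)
The equivalence (1)$\Leftrightarrow$(2) is the direct content of Theorem~\ref{thm:fgproperties}(1). For (3)$\Rightarrow$(1), since minimal projective resolutions are additive under direct sum, $\cx_{\M}(N \oplus Q) = \max\{\cx_{\M}(N), \cx_{\M}(Q)\}$; a projective $Q$ contributes complexity $0$, while a nonzero periodic $N$ has a minimal projective resolution whose term-lengths form a bounded, eventually periodic, nonzero sequence, hence polynomial rate of growth exactly one. Thus $\cx_{\M}(M) = 1$.

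The substantive implication is (1)$\Rightarrow$(3), and my plan is to split off the maximal projective summand of $M$ and show that what remains is periodic. Using Krull--Schmidt in $\M$ (valid by Remark~\ref{rem:properties}(3)), write $M \simeq N \oplus Q$ with $Q$ projective and $N$ carrying no projective summand. Since $\cx_{\M}(Q) = 0$ and $\cx_{\M}(M) = 1$ we obtain $\cx_{\M}(N) = 1$; in particular $N \ne 0$. To prove that $N$ is periodic, I invoke Theorem~\ref{thm:fgproperties}(6) applied to $N$: there is a short exact sequence
$$0 \to N \to K \to \Omega_{\M}^n(N) \to 0$$
with $\cx_{\M}(K) = \cx_{\M}(N) - 1 = 0$, and hence with $K$ projective by Theorem~\ref{thm:fgproperties}(2). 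Comparing this with the short exact sequence $0 \to \Omega_{\M}^{n+1}(N) \to P_n \to \Omega_{\M}^n(N) \to 0$ extracted from the minimal projective resolution of $N$, Schanuel's lemma yields
$$N \oplus P_n \simeq \Omega_{\M}^{n+1}(N) \oplus K.$$
Both $N$ and $\Omega_{\M}^{n+1}(N)$ are projective-summand-free (the latter because syzygies from a minimal projective resolution cannot split off a projective, or else minimality would be violated), so Krull--Schmidt forces $N \simeq \Omega_{\M}^{n+1}(N)$. Thus $N$ is periodic of period dividing $n+1$.

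The main obstacle is precisely this last step: translating the purely numerical condition $\cx_{\M}(N) = 1$ into the structural statement that $N$ is isomorphic to one of its own syzygies. Theorem~\ref{thm:fgproperties}(6) is what makes this possible, by trading a unit of complexity for a projective quotient; combined with a Schanuel comparison and the preliminary Krull--Schmidt reduction to the projective-summand-free setting, it forces the periodicity that classifies indecomposable objects of complexity one.
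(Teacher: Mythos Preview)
Your proof is correct and follows essentially the same route as the paper's: both use Theorem~\ref{thm:fgproperties}(6) to produce a short exact sequence with projective middle term, then Schanuel's Lemma to extract periodicity. The only difference is cosmetic ordering: you first strip off the maximal projective summand via Krull--Schmidt and then apply the reduction lemma to the projective-free part $N$, whereas the paper applies Theorem~\ref{thm:fgproperties}(6) directly to $M$, obtains $M \simeq \Omega_{\M}^{n+1}(M) \oplus Q$ from Schanuel, and then sets $N = \Omega_{\M}^{n+1}(M)$, checking periodicity by the one-line computation $\Omega_{\M}^{n+1}(M) \simeq \Omega_{\M}^{n+1}(N \oplus Q) \simeq \Omega_{\M}^{n+1}(N)$. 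The paper's version avoids the separate Krull--Schmidt step and the appeal to projective-summand-freeness of minimal syzygies, but both arguments are equally valid and of the same length and depth.
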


\begin{proof}
The equivalence of (1) and (2) is a special case of Theorem \ref{thm:fgproperties}(1). If (3) holds, then the sequence
$$\left ( \dim_k \Ext_{\M}^n(M,M) \right )_{n=0}^{\infty}$$
is bounded and not eventually zero, and so its rate of growth is $1$. By Theorem \ref{thm:fgproperties}(1) again, this rate of growth equals the complexity of $M$, hence (1) follows.

Finally, suppose that (1) holds. Then by Theorem \ref{thm:fgproperties}(6), there exists a short exact sequence
$$0 \to M \to K \to \Omega_{\M}^n(M) \to 0$$
for some $n \ge 0$, with $\cx_{\M}(K) = 0$. By Theorem \ref{thm:fgproperties}(2), the object $K$ is then projective, and so by Schanuel's Lemma for abelian categories (see \cite[Lemma 2.2]{BPW}), there is an isomorphism $M \simeq \Omega_{\M}^{n+1}(M) \oplus Q$ for some projective object $Q$. Now take $N =  \Omega_{\M}^{n+1}(M)$; this is a periodic object since
$$N = \Omega_{\M}^{n+1}(M) \simeq \Omega_{\M}^{n+1} \left ( \Omega_{\M}^{n+1}(M) \oplus Q \right ) =  \Omega_{\M}^{n+1}(N \oplus Q) \simeq \Omega_{\M}^{n+1}(N)$$
This shows that (1) implies (3).
\end{proof}

\begin{corollary}\label{cor:periodic}
If $\C$ satisfies \emph{\textbf{Fg}}, and $M$ is a nonzero indecomposable object of $\M$, then $\cx_{\M}(M) =1$ if and only if $M$ is periodic.
\end{corollary}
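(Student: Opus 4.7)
The plan is to derive the corollary directly from Theorem \ref{thm:complexityone} by exploiting the indecomposability hypothesis on $M$. The theorem gives three equivalent conditions; the third decomposes any complexity-one object as a direct sum of a nonzero periodic object and a projective object. When $M$ is indecomposable, this decomposition must collapse, and that is essentially the whole content.

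For the forward direction, I would assume $\cx_{\M}(M) = 1$ and invoke the equivalence (1) $\Leftrightarrow$ (3) in Theorem \ref{thm:complexityone} to produce a decomposition $M \simeq N \oplus Q$ with $N$ nonzero periodic and $Q$ projective. Since $M$ is indecomposable and $N \ne 0$, the summand $Q$ must vanish, giving $M \simeq N$, which is periodic.

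For the reverse direction, I would start from the assumption that $M$ is periodic and nonzero, and first observe that $M$ is not projective. Indeed, if $M$ were projective, its minimal projective resolution would be concentrated in degree zero, so $\Omega_{\M}^n(M) = 0$ for all $n \ge 1$, contradicting the periodicity relation $M \simeq \Omega_{\M}^n(M)$ together with $M \ne 0$. Writing $M \simeq M \oplus 0$ trivially realizes the decomposition in Theorem \ref{thm:complexityone}(3) with nonzero periodic summand $M$ and projective summand $0$, so the equivalence (3) $\Rightarrow$ (1) in that theorem yields $\cx_{\M}(M) = 1$.

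There is essentially no obstacle here; the only point that needs a line of care is ensuring the reverse direction does not slip into the trap of a projective periodic object, which is handled by the observation above. Everything else is a bookkeeping application of the indecomposability assumption to the structural decomposition provided by Theorem \ref{thm:complexityone}.
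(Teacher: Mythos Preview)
Your proposal is correct and is precisely the intended argument: the paper states this corollary without proof, treating it as immediate from Theorem~\ref{thm:complexityone}, and your derivation via the decomposition $M \simeq N \oplus Q$ collapsing under indecomposability is exactly how one unpacks it. The observation that a nonzero periodic object cannot be projective is a good sanity check, though strictly speaking the reverse implication follows directly from $(3)\Rightarrow(1)$ with $N=M$ and $Q=0$.
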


We have defined support varieties in terms of the maximal ideal spectrum of $\Ho (\C)$. However, in some of the arguments that follow, we need to consider prime ideals in general; as usual, we denote the set of prime ideals of $\Ho (\C)$ by $\Spec \Ho (\C)$. For an object $M \in \M$, we denote the support of the $\Ho (\C)$-module $ \Ext_{\M}^*(M,M)$ by $\Supp_{\M} (M)$, that is, the set of prime ideals of $\Ho (\C)$ with $\Ext_{\M}^*(M,M)_{\p} \neq 0$. When the finiteness condition \textbf{Fg} holds, then 
$$\Supp_{\M} (M) = \{ \p \in \Spec \Ho (\C) \mid I_{\M}(M) \subseteq \p \}$$
and hence
$$\VM(M) = \Supp_{\M} (M) \cap \Maxspec \Ho (\C)$$
whenever $M$ is a nonzero object. Note also that the finiteness condition implies that the set of minimal primes of $\Supp_{\M} (M)$ is finite, and that these are associated primes of the $\Ho (\C)$-module $ \Ext_{\M}^*(M,M)$; see \cite[Theorem 3.1.a]{Eisenbud}. Furthermore, by \cite[Lemma 1.5.6]{BrunsHerzog}, these minimal primes are in fact graded. When $M$ is nonzero and $\p_1, \dots, \p_t$ are the minimal primes of $\Supp_{\M} (M)$, then $\VM(M) = Z( \p_1 ) \cup \cdots \cup Z( \p_t )$. The $Z( \p_i )$ are the irreducible components of $\VM(M)$, hence the support variety is irreducible if and only if $\Supp_{\M} (M)$ contains a unique minimal prime (when $i \neq j$ then $Z( \p_i ) \neq Z( \p_j )$; see the paragraphs following \cite[Remark 3.5]{BPW}). The following result shows that the support variety of an indecomposable periodic object is of this form.

\begin{proposition}\label{prop:indecperiodic}
Suppose that $\C$ satisfies \emph{\textbf{Fg}}, and that $X \in \C$ and $M \in \M$ are nonzero indecomposable periodic objects. Then $\VC(X)$ and $\VM(M)$ are irreducible \emph{(}that is, $\Supp_{\C}(X)$ contains a unique minimal prime, and so does $\Supp_{\M}(M)$\emph{)}, and either $\VC(X) = \VM(M)$, or $\VC(X) \cap \VM(M) = \{ \m_0 \}$.
\end{proposition}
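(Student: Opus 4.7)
The plan is to exploit the 1-dimensionality of the varieties, combined with the decomposition property of Theorem \ref{thm:fgproperties}(8) and the indecomposability hypothesis. First I would invoke Corollary \ref{cor:periodic} to conclude that both $X$ and $M$ have complexity one, and then Theorem \ref{thm:fgproperties}(1) to conclude that $\dim \VC(X) = \dim \VM(M) = 1$. The strategy is then to prove irreducibility of $\VC(X)$ (the case of $\VM(M)$ being identical, using that $\C$ acts on itself as a module category by Remark \ref{rem:properties}(5)), and finally to analyze the intersection $\VC(X) \cap \VM(M)$ by the same method.

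For irreducibility of $\VC(X)$, write $\VC(X) = Z(\p_1) \cup \cdots \cup Z(\p_t)$ using the minimal primes of $\Supp_{\C}(X)$, which are graded by \cite[Lemma 1.5.6]{BrunsHerzog}. Since $X$ is nonzero and not projective, each $\p_i$ is a proper graded prime with $\p_i \subsetneq \m_0$, so $\dim \Ho(\C)/\p_i \ge 1$, and the dimension bound from Theorem \ref{thm:fgproperties}(1) forces $\dim \Ho(\C)/\p_i = 1$ for every $i$. The core lemma I need is then that for $i \ne j$ one has $Z(\p_i) \cap Z(\p_j) = \{\m_0\}$. To see this, suppose a maximal ideal $\m \ne \m_0$ contains $\p_i + \p_j$, and let $\m^*$ be its graded part (a graded prime by \cite[Lemma 1.5.6]{BrunsHerzog}) which strictly refines $\m$ because $\m$ is not graded. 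Then $\m^*$ is a graded prime with $\p_i \subseteq \m^*$; but since $\Ho(\C)/\p_i$ has Krull dimension $1$, every prime strictly above $\p_i$ is maximal in $\Ho(\C)$, and being graded such a prime must equal $\m_0$, contradicting $\m^* \subsetneq \m$. Hence $\m^* = \p_i$, and symmetrically $\m^* = \p_j$, contradicting $\p_i \ne \p_j$.

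Once this is established, if $t \ge 2$ I would set $V_1 = Z(\p_1)$ and $V_2 = Z(\p_2) \cup \cdots \cup Z(\p_t)$, both conical subvarieties with $V_1 \cap V_2 = \{\m_0\}$. Applying Theorem \ref{thm:fgproperties}(8) to $X$ (viewed as an object in $\C$ as a module category over itself) gives a decomposition $X \simeq X_1 \oplus X_2$ with $\VC(X_i) = V_i$, both nontrivial hence both $X_i$ nonzero by Theorem \ref{thm:fgproperties}(2), which contradicts indecomposability. So $t = 1$, i.e.\ $\VC(X) = Z(\p)$ is irreducible, and likewise $\VM(M) = Z(\q)$ for some graded prime $\q$ with $\dim \Ho(\C)/\q = 1$.

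For the dichotomy, if $\p = \q$ then $\VC(X) = \VM(M)$. Otherwise, I would repeat the graded-part argument verbatim: any maximal ideal $\m \ne \m_0$ in $Z(\p) \cap Z(\q) = Z(\p+\q)$ has graded part $\m^* \subsetneq \m$ containing both $\p$ and $\q$, and the same dimension analysis forces $\m^* = \p$ and $\m^* = \q$, yielding $\p = \q$, contrary to assumption. Hence $\VC(X) \cap \VM(M) = \{\m_0\}$. The main technical step throughout is the prime-ideal bookkeeping in the second paragraph — translating ``1-dimensional conical subvariety'' into ``graded prime of quotient dimension one meets another such only at the origin'' — and it is the only place one has to be careful; everything else is a direct application of results already available in the paper.
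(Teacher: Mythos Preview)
Your proposal is correct and follows essentially the same approach as the paper: reduce to dimension one via Corollary~\ref{cor:periodic} and Theorem~\ref{thm:fgproperties}(1), use the graded-part trick $\m \mapsto \m^*$ together with the dimension constraint to show distinct minimal primes meet only at $\m_0$, and then invoke Theorem~\ref{thm:fgproperties}(8) against indecomposability. The dichotomy argument is likewise the same, just phrased slightly more directly than in the paper (which first observes $\p,\q \subsetneq \p+\q$ before running the $\m^*$ argument); both versions are valid.
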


\begin{proof}
Let $\p_1, \dots, \p_t$ be the minimal primes of $\Supp_{\M}(M)$, so that $\VM(M) = Z( \p_1 ) \cup \cdots \cup Z( \p_t )$; recall that these primes are all graded. Note first that none of them can be maximal, that is, equal to $\m_0$ (there is only one graded maximal ideal in $\Ho (\C)$, namely $\m_0$). For suppose this were the case, with, say, $\p_1 = \m_0$. If $t=1$, then $\VM(M) = Z( \m_0 ) = \{ \m_0 \}$, and hence $\dim \VM(M) = 0$. But $M$ is nonzero and periodic, and so by Theorem \ref{thm:complexityone}, the dimension of $\VM(M)$ must be $1$. If $\p_1 = \m_0$ and $t \ge 2$, then the prime $\p_1$ is not minimal in $\Supp_{\M}(M)$, since the other primes $\p_2, \dots, \p_t$ are contained in $\m_0$.

Thus none of the minimal primes $\p_i$ are maximal, and so the Krull dimension of $\Ho (\C) / \p_i$ is at least $1$, for each $i$. But since $\dim \VM(M) = 1$, and this dimension is the maximum among $\dim \Ho (\C) / \p_1, \dots, \dim \Ho (\C) / \p_t$, it follows that $\dim \Ho (\C) / \p_i =1$ for each $i$. Of course, since $\p_i$ is graded, the graded maximal ideal $\m_0$ belongs to $Z( \p_i )$, but this irreducible component must also contain a non-graded maximal ideal. For if $\m_0$ were the only maximal ideal containing $\p_i$, then by \cite[Theorem 25]{Matsumura} the radical $\sqrt{\p_i}$ of $\p_i$ would be $\m_0$, a contradiction since $\dim \Ho (\C) / \sqrt{\p_i} = \dim \Ho (\C) / \p_i =1$.

Suppose now that $t \ge 2$, and set $V_1 = Z( \p_1 )$ and $V_2 = Z( \p_2 ) \cup \cdots \cup Z( \p_t ) = Z( \p_2 \cdots \p_t )$. If the intersection $V_1 \cap V_2$ contains a non-graded maximal ideal $\m$, then both $\p_1$ and $\p_i$ are contained in $\m$, for some $i \ge 2$. Now consider the graded ideal $\m^*$ generated by all the homogeneous elements of $\m$; it is a graded prime ideal by \cite[Lemma 1.5.6]{BrunsHerzog}. Then since $\p_1 \subseteq \m^*$ and $\p_i \subseteq \m^*$, and $\m^*$ is properly contained in $\m$, we see that $\p_1 = \m^* = \p_i$. Namely, if $\p_1$, say, were properly contained in $\m^*$, then the Krull dimension of $\Ho (\C) / \p_1$ would be at least $2$, and similarly for $\p_i$. But $\p_1 \neq \p_i$, and so we conclude that $V_1 \cap V_2 = \{ \m_0 \}$. Then by Theorem \ref{thm:fgproperties}(8), the object $M$ admits a direct sum decomposition $M \simeq M_1 \oplus M_2$, where $M_1$ and $M_2$ are objects with $\VM(M_i) = V_i$. Moreover, since $\dim V_i =1$, none of these objects can be zero. This is a contradiction, since the object $M$ is indecomposable, and so $\VM(M)$ must be irreducible. The same proof works for $\VC(X)$.

For the last part of the statement, let $\p$ and $\q$ be the unique minimal primes of $\Supp_{\C}(X)$ and $\Supp_{\M}(M)$, respectively. Then $\VC(X) = Z( \p )$ and $\VM(M) = Z( \q )$, giving
$$\VC(X) \cap \VM(M) = Z( \p ) \cap Z( \q ) = Z( \p + \q )$$
Suppose that $\VC(X) \neq \VM(M)$, so that $\p \neq \q$. Then both $\p$ and $\q$ must be properly contained in $\p + \q$; if $\p = \p + \q$, say, then $\q \subseteq \p$, and the containment would be strict since $\p \neq \q$. But then the dimension of $Z( \q )$ would be greater than that of $Z( \p )$, a contradiction since $\dim Z( \q ) = \dim \VM(M) = 1 = \VC(X) = \dim Z( \p )$ by Theorem \ref{thm:complexityone}. 

Now take any maximal ideal $\m \in \VC(X) \setminus \{ \m_0 \}$. Since $\m$ is not graded, the inclusion $\m^* \subset \m$ is proper, where, as before, $\m^*$ is the graded (prime) ideal of $\Ho (\C)$ generated by all the homogeneous elements of $\m$. As this is necessarily the unique maximal graded ideal contained in $\m$, and $\p$ is graded, we see that $\p$ must equal $\m^*$; otherwise, the dimension of $\VC(X)$ would have been at least $2$, but we know that it is $1$. This implies that $\m$ can not belong to $\VC(X) \cap \VM(M)$, for if it did, then it would have to contain $\p + \q$, which is a graded ideal that strictly contains $\m^*$. 
\end{proof}

In general, if $I$ is an ideal of $\Ho (\C)$, then there is an equality $Z (I) = Z ( \sqrt{I} )$, where $\sqrt{I} $ denotes the radical of $I$. When the finiteness condition \textbf{Fg} holds, then by \cite[Theorem 25]{Matsumura}, the radical of a proper ideal of $\Ho (\C)$ equals the intersection of all the maximal ideals containing it, a fact that we just used in the proof of Proposition \ref{prop:indecperiodic}, and also in Remark \ref{rem:trivialvariety}(3). Consequently, in this setting, whenever $I$ and $J$ are two proper ideals of $\Ho (\C)$, we see that $Z(I) = Z(J)$ if and only if $\sqrt{I} = \sqrt{J}$. We shall use this fact in the proof of the following result, which is the key ingredient in the main theorem; it allows us to reduce the complexities of the objects when we want to establish the module product property for support varieties. A general such reduction result is provided by Theorem \ref{thm:fgproperties}(6), but now we need a much stronger version.

\begin{proposition}\label{prop:reducecomplexity}
Suppose that $\C$ satisfies \emph{\textbf{Fg}}, and that $M \in \M$ is an object with $\cx_{\M}(M) \ge 2$ and $\VM(M)$ irreducible \emph{(}so that $\Supp_{\M} (M)$ contains a unique minimal prime\emph{)}. Then for every $\m \in \VM(M)$ there exists a short exact sequence
$$0 \to W \to \Omega_{\M}^{n}(M) \oplus Q \to M \to 0$$
in $\M$, with the following properties:
\begin{itemize}
\item[(1)] The object $Q$ is projective, and $n \ge 1$;
\item[(2)] $\cx_{\M} (W) = \cx_{\M} (M)-1$;
\item[(3)] $\m \in \VM(W)$.
\end{itemize}
\end{proposition}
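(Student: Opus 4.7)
The plan is to build $W$ using Carlson's $L_\zeta$ construction. Let $\p$ be the unique minimal prime of $\Supp_{\M}(M)$, so that $\VM(M) = Z(\p)$, and set $S = \Ho(\C)/\p$; by Theorem \ref{thm:fgproperties}(1) this is a finitely generated graded domain of Krull dimension $\cx_{\M}(M) \ge 2$. Given $\m \in \VM(M)$, the first task is to produce a homogeneous element $\zeta \in \m$ of positive degree $n$ with $\zeta \notin \p$. Once $\zeta$ is in hand, I would apply the exact bifunctor $- \ast M$ to the defining short exact sequence $0 \to L_\zeta \to \Omega_{\C}^n(\unit) \to \unit \to 0$ in $\C$, obtaining $0 \to L_\zeta \ast M \to \Omega_{\C}^n(\unit) \ast M \to M \to 0$ in $\M$. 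A Schanuel comparison between the projective resolution $P_{\bu} \ast M$ of $M$ coming from a minimal resolution $P_{\bu} \to \unit$ in $\C$, on the one hand, and the minimal resolution of $M$ in $\M$, on the other, identifies $\Omega_{\C}^n(\unit) \ast M$ with $\Omega_{\M}^n(M) \oplus Q$ for some projective $Q \in \M$ (using that minimal syzygies have no projective direct summand and Krull--Schmidt in $\M$). Setting $W = L_\zeta \ast M$ then yields the desired sequence.

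With this setup the three conditions drop out quickly. Condition (1) is immediate from the construction. For (3), Theorem \ref{thm:fgproperties}(3) gives $\VM(W) = Z(\zeta) \cap \VM(M)$, and $\m$ lies in this intersection since $\zeta \in \m$ by construction and $\m \in \VM(M)$ by hypothesis. For (2), the same theorem identifies $\VM(W)$ with $Z((\zeta)+\p)$; since $\bar{\zeta}$ is a nonzero element of the domain $S$, Krull's principal ideal theorem gives $\dim S/(\bar{\zeta}) = \dim S - 1 = \cx_{\M}(M) - 1$, and Theorem \ref{thm:fgproperties}(1) then translates this into $\cx_{\M}(W) = \cx_{\M}(M) - 1$.

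The real work lies in the very first step: producing a homogeneous $\zeta \in \m \setminus \p$ of positive degree, or equivalently showing that the graded prime $\m^* \subseteq \m$ generated by the homogeneous elements of $\m$ strictly contains $\p$. When $\m = \m_0$ this is clear, as $\dim S \ge 2$ forces $\p \subsetneq \m_0$. For a non-graded maximal $\m$, I would argue by contradiction: if $\m^* = \p$, then no nonzero positive-degree homogeneous element of $S$ lies in $\m/\p$, so each graded piece $S_d$ injects into the residue field $S/\m$. By the Nullstellensatz this residue field is finite-dimensional over the ground field, whereas the Hilbert function of $S$ is polynomial of degree $\dim S - 1 \ge 1$, so $\dim_k S_d$ grows without bound, a contradiction. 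I expect this graded commutative-algebra point to be the only genuinely non-formal ingredient of the proof; the rest is a direct assembly of $L_\zeta$-techniques, biexactness of $\ast$, and Schanuel's lemma.
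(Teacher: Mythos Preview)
Your proof is correct and follows the same overall strategy as the paper: set $W = L_\zeta \ast M$ for a homogeneous $\zeta \in \m \setminus \p$ of positive degree, obtain the short exact sequence by applying $- \ast M$ to the defining sequence of $L_\zeta$ and invoking Schanuel, and read off (2) and (3) from Theorem~\ref{thm:fgproperties}(3). The only difference is in the supporting commutative algebra. To produce $\zeta$, the paper builds a saturated chain $\p = \p_0 \subset \p_1 \subset \cdots \subset \p_{d-1} \subset \m$ of \emph{graded} primes (using \cite[Corollary 13.4]{Eisenbud} for equidimensionality and \cite[Theorem 1.5.8]{BrunsHerzog} to make the chain graded) and picks $\zeta \in \p_1 \setminus \p_0$; you instead show $\m^* \supsetneq \p$ directly via the Nullstellensatz and a Hilbert-function growth contradiction. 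For the dimension drop, the paper reuses its chain to bound $\dim \Ho(\C)/(I_{\M}(M)+(\zeta))$ from both sides by hand, whereas you appeal to the standard fact that modding out a nonzero element of a finitely generated domain over a field lowers the Krull dimension by exactly one. Both routes are valid and of comparable length; yours is a bit more self-contained, while the paper's chain argument avoids any appeal to Hilbert series. One cosmetic point: since $\Ho(\C)$ need not be generated in a single degree, the Hilbert function of $S$ is only quasi-polynomial, not polynomial --- but the conclusion you need, that $\dim_k S_d$ is unbounded when $\dim S \ge 2$, still holds (e.g.\ via the order of the pole of the Hilbert series at $t=1$).
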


\begin{proof}
Let $\p_0$ be the unique minimal (graded) prime of $\Supp_{\M} (M)$, and denote the complexity of $M$ by $d$. Since $\Ho (\C)$ is a finitely generated $k$-algebra by assumption, the quotient $\Ho (\C) / \p_0$ is a finitely generated integral domain. Therefore, by \cite[Corollary 13.4]{Eisenbud}, all the maximal ideals of $\Ho (\C) / \p_0$ are of the same height, namely $\dim \Ho (\C) / \p_0$. The dimension of $\Ho (\C) / \p_0$ equals that of $\Ho (\C) / I_{\M}(M)$, which by definition is the dimension of $\VM(M)$. Thus from Theorem \ref{thm:fgproperties}(1) we see that every maximal ideal of $\Ho (\C) / \p_0$ is of height $d$.

Now let $\m$ be a point in $\VM(M)$. It follows from the above that there exists a strictly increasing chain
$$\p_0 \subset \cdots \subset \p_{d-1} \subset \m$$
in $\Spec \Ho (\C)$. However, by \cite[Theorem 1.5.8]{BrunsHerzog}, there actually exists such a chain in which all the prime ideals $\p_i$ are graded; if $\m \neq \m_0$, we can take as $\p_{d-1}$ the graded ideal $\m^*$ of $\Ho (\C)$, generated by all the homogeneous elements of $\m$ (recall that by \cite[Lemma 1.5.6]{BrunsHerzog}, this is a graded prime ideal).

Take any nonzero homogeneous element $\zeta \in \p_1 \setminus \p_0$; this is possible since $d \ge 2$. Note that since $\p_1 \subseteq \m_0 = \Coh^{+} (\C)$, the degree $n$ of $\zeta$ is positive. From this element we obtain a short exact sequence
$$0 \to L_{\zeta} \to \Omega_{\C}^n( \unit ) \to \unit \to 0$$
in $\C$, where the object $L_{\zeta}$ is Carlson's $L_{\zeta}$-object discussed right before Theorem \ref{thm:fgproperties}. Since $\M$ is an exact $\C$-module category, we obtain a (not necessarily minimal) projective resolution of $M$ when we apply $- \ast M$ to the minimal projective resolution of the unit object $\unit$. Consequently, by Schanuel's Lemma for abelian categories (see \cite[Lemma 2.2]{BPW}), there is an isomorphism $\Omega_{\C}^n( \unit ) \ast M \simeq \Omega_{\M}^n( M ) \oplus Q$ for some projective object $Q \in \M$. As a result, when we  apply $- \ast M$ to the above short exact sequence, we obtain a short exact sequence
$$0 \to L_{\zeta} \ast M \to \Omega_{\M}^n( M ) \oplus Q \to M \to 0$$
in $\M$. By Theorem \ref{thm:fgproperties}(3), there is an equality $\VM( L_{\zeta} \ast M ) = \VM(M) \cap Z( \zeta )$, and so in particular we see that $\m \in \VM( L_{\zeta} \ast M )$.

It remains to show that $\cx_{\M}( L_{\zeta} \ast M ) = d-1$, or, what amounts to the same thing by Theorem \ref{thm:fgproperties}(1), that $\dim \VM( L_{\zeta} \ast M ) = d - 1$. Now
\begin{eqnarray*}
Z \left ( I_{\M}(M) + ( \zeta ) \right ) & = & Z( I_{\M}(M) ) \cap Z( \zeta ) \\
& = & \VM(M) \cap Z( \zeta ) \\
& = & \VM( L_{\zeta} \ast M ) \\
& = & Z( I_{\M}(L_{\zeta} \ast M))
\end{eqnarray*}
hence there is an equality $\sqrt{I_{\M}(L_{\zeta} \ast M)} = \sqrt{I_{\M}(M) + ( \zeta )}$. The dimension of $\VM( L_{\zeta} \ast M )$ is by definition the Krull dimension of $\Ho (\C)/ I_{\M}(L_{\zeta} \ast M)$, which in turn equals that of $\Ho (\C)/ \sqrt{I_{\M}(L_{\zeta} \ast M)}$. Therefore it suffices to show that the Krull dimension of $\Ho (\C)/ \sqrt{I_{\M}(M) + ( \zeta )}$ is $d-1$. For this, consider the chain
$$\p_0 \subset \cdots \subset \p_{d-1} \subset \m$$
of prime ideals from the beginning of the proof. Since $I_{\M}(M)  \subseteq \p_0$ and $\zeta \in \p_1$, the radical ideal $\sqrt{I_{\M}(M) + ( \zeta )}$ is contained in $\p_1$, giving $\dim \Ho (\C)/ \sqrt{I_{\M}(M) + ( \zeta )} \ge d-1$. However, if the inequality were strict, then there would exist a strictly increasing chain
$$\q_0 \subset \cdots \subset \q_d$$
of prime ideals in $\Ho (\C)$, all containing the ideal $\sqrt{I_{\M}(M) + ( \zeta )}$. Since $\zeta \notin \p_0$, and $\p_0$ is the unique minimal prime ideal lying over the ideal $I_{\M}(M)$, we would obtain a strictly increasing chain
$$\p_0 \subset \q_0 \subset \cdots \subset \q_d$$
in $\Supp_{\M}(M)$. But then $\cx_{\M}(M) = \dim \VM(M) \ge d+1$, a contradiction. This shows that the complexity of the object $L_{\zeta} \ast M$ is $d-1$.
\end{proof}

We are now ready to prove the main result.

\begin{theorem}\label{thm:main}
Let $k$ be a field, and $\left ( \C, \ot, \unit \right )$ a finite braided tensor $k$-category satisfying \emph{\textbf{Fg}}. Furthermore, let $\left ( \M, \ast \right )$ be an exact left $\C$-module category, whose set of isomorphism classes of simple objects is finite. Then the following are equivalent:
\begin{itemize}
\item[(1)] $\VM( X \ast M ) = \VC(X) \cap \VM(M)$ for all objects $X \in \C, M \in \M$;
\item[(2)] $\VM( X \ast M ) = \VC(X) \cap \VM(M)$ for all objects $X \in \C, M \in \M$ of complexity $1$;
\item[(3)] $\VM( X \ast M ) = \VC(X) \cap \VM(M)$ for all indecomposable periodic objects $X \in \C, M \in \M$.
\end{itemize}
\end{theorem}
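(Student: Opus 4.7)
The plan is to establish $(1) \Rightarrow (2) \Rightarrow (3) \Rightarrow (2) \Rightarrow (1)$, with only the last implication having real content. The implications $(1) \Rightarrow (2) \Rightarrow (3)$ are immediate, the second using Corollary~\ref{cor:periodic} to identify indecomposable periodic objects with indecomposable objects of complexity one. For $(3) \Rightarrow (2)$: if $X \in \C$ and $M \in \M$ have complexity one, Theorem~\ref{thm:complexityone} together with Krull--Schmidt writes each as a direct sum of projectives and indecomposable periodic objects; distributing $\ast$ across the sum, using Proposition~\ref{prop:elementaryproperties}(1), and discarding projective summands (whose products are projective with trivial support) reduces the claim to~(3).

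For the main step $(2) \Rightarrow (1)$, one containment $\VM(X \ast M) \subseteq \VC(X) \cap \VM(M)$ is Proposition~\ref{prop:elementaryproperties}(6), where braidedness enters. For the reverse containment I would induct on $\cx_{\C}(X) + \cx_{\M}(M)$, reducing via additivity to $X, M$ indecomposable. The base case (complexity sum $\le 2$) collapses either to a projective scenario (both sides being $\{\m_0\}$) or to hypothesis~(2). In the inductive step, assume without loss $\cx_{\M}(M) \ge 2$, fix $\m \in \VC(X) \cap \VM(M)$, and aim to show $\m \in \VM(X \ast M)$.

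The key construction is to produce a short exact sequence
\[
0 \to W \to \Omega_{\M}^n(M) \oplus Q \to M \to 0
\]
with $Q$ projective, $\cx_{\M}(W) < \cx_{\M}(M)$, and $\m \in \VM(W)$. Applying the exact functor $X \ast -$, noting that $X \ast P$ is projective when $P$ is (Remark~\ref{rem:properties}(2)) and that $X \ast \Omega_{\M}^n(M)$ agrees with $\Omega_{\M}^n(X \ast M)$ modulo a projective summand by Schanuel, and then combining Proposition~\ref{prop:elementaryproperties}(4) and~(5), yields $\VM(X \ast W) \subseteq \VM(X \ast M)$. Each indecomposable summand of $W$ has complexity strictly smaller than $\cx_{\M}(M)$, so the inductive hypothesis applies and gives $\VM(X \ast W) = \VC(X) \cap \VM(W) \ni \m$, completing the induction.

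The main obstacle is producing the short exact sequence above when $\VM(M)$ fails to be irreducible --- a case Proposition~\ref{prop:reducecomplexity} does not directly address. I would extend that argument via graded prime avoidance. Let $\p_1, \dots, \p_r$ be the minimal primes of $\Supp_{\M}(M)$ and $\m^*$ the largest graded ideal contained in $\m$. The goal is to find a homogeneous $\zeta \in \m$ lying outside every top-dimensional $\p_j$ (those with $\dim \Ho(\C)/\p_j = \cx_{\M}(M)$); this is possible provided $\m^* \neq \p_j$ for each such $j$. That inequality holds because, in the finitely generated graded $k$-algebra domain $\Ho(\C)/\p_j$, the height of a maximal ideal exceeds the height of its graded part by at most one, so when $\dim \Ho(\C)/\p_j \ge 2$ we must have $\m^* \supsetneq \p_j$. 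With this $\zeta$, Theorem~\ref{thm:fgproperties}(3) gives $\VM(L_\zeta \ast M) = \VM(M) \cap Z(\zeta)$ of strictly smaller dimension yet still containing $\m$, and applying $- \ast M$ to $0 \to L_\zeta \to \Omega_{\C}^n(\unit) \to \unit \to 0$ followed by Schanuel's lemma (identifying $\Omega_{\C}^n(\unit) \ast M$ with $\Omega_{\M}^n(M) \oplus Q$) delivers the required sequence with $W = L_\zeta \ast M$.
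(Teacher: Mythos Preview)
Your proof is correct, and the overall architecture---the easy implications, the additivity reduction for $(3)\Rightarrow(2)$, and the induction on $\cx_{\C}(X)+\cx_{\M}(M)$ for $(2)\Rightarrow(1)$---matches the paper's. The difference lies in how the complexity-reduction step is executed. The paper does not attempt to handle a reducible $\VM(M)$ directly: instead it first manufactures, for each minimal prime $\p_i$ of $\Supp_{\M}(M)$, an auxiliary object $M_i = L_{\zeta_s}\ast\cdots\ast L_{\zeta_1}\ast M$ (with the $\zeta_j$ generating $\p_i$) satisfying $\VM(M_i)=Z(\p_i)$ and $\VM(X\ast M_i)\subseteq\VM(X\ast M)$, thereby reducing to the irreducible case where Proposition~\ref{prop:reducecomplexity} applies verbatim. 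You instead bypass this reduction by strengthening the choice of $\zeta$: graded prime avoidance, together with the fact that a non-graded maximal ideal has height exactly one more than its graded part (so $\m^*\supsetneq\p_j$ whenever $\dim\Ho(\C)/\p_j\ge 2$), lets you pick a single homogeneous $\zeta\in\m^*$ missing every top-dimensional minimal prime, so that $W=L_\zeta\ast M$ already has strictly smaller complexity while $\m\in\VM(W)$. Your route is more economical (one $L_\zeta$-construction rather than a cascade), at the cost of a slightly sharper commutative-algebra input; the paper's route isolates Proposition~\ref{prop:reducecomplexity} as a clean standalone statement with the ``complexity drops by exactly one'' conclusion. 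Both are valid, and the distinction is one of packaging rather than of mathematical substance.
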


\begin{proof}
If the module product property holds for all objects, then in particular it holds for objects of complexity $1$, hence (2) trivially follows from (1). Since every nonzero indecomposable periodic object has complexity $1$ by Corollary \ref{cor:periodic}, we see that (3) follows from (2). Moreover, (2) follows from (3) because both module products and support varieties respect direct sums. For suppose that $X \in \C$ and $M \in \M$ are objects of complexity $1$, and decompose them both into direct sums $X \simeq X_1 \oplus \cdots \oplus X_s$ and $M \simeq M_1 \oplus \cdots \oplus M_t$, with all the $X_i$ and $M_j$ indecomposable. Then each of these summands is either projective, or periodic by Corollary \ref{cor:periodic}. In general, if $Y \in \C$ and $N \in \M$ are objects, and one of them is projective, then so is $Y \ast N$ by Remark \ref{rem:properties}(2) and the definition of an exact module category, hence both $\VM (Y \ast N)$ and $\VC(Y) \cap \VM(N)$ equal $\{ \m_0 \}$. Therefore, if (3) holds, then
\begin{eqnarray*}
\VM(X \ast M) & = & \VM \left ( \bigoplus_{i,j} \left ( X_i \ast M_j \right ) \right ) \\
& = & \bigcup_{i,j} \VM \left ( X_i \ast M_j \right ) \\
& = & \bigcup_{i,j} \left ( \VC(X_i) \cap \VM(M_j) \right ) \\
& = & \left (  \bigcup_{i} \VC(X_i) \right ) \cap \left (  \bigcup_{j} \VM(M_j) \right ) \\
& = & \VC(X) \cap \VM(M)
\end{eqnarray*}
where we have used Proposition \ref{prop:elementaryproperties}(1). It follows that (2) holds. 

Finally, we will prove that (1) follows from (2). 
Suppose now that (2) holds, and let $X$ and $M$ be arbitrary objects of $\C$ and $\M$, respectively. As above, if one of them is projective, then so is $X \ast M$, and both $\VM(X \ast M)$ and $\VC(X) \cap \VM(M)$ equal $\{ \m_0 \}$. We may therefore suppose that both $X$ and $M$ are nonprojective, that is, that $\cx_{\C}(X) \ge 1$ and $\cx_{\M}(M) \ge 1$. We now argue by induction on the sum $\cx_{\C}(X) + \cx_{\M}(M)$; the assumption being that the module product property holds when this sum is $2$. By Proposition \ref{prop:elementaryproperties}(6), the inclusion $\VM(X \ast M) \subseteq \VC(X) \cap \VM(M)$ holds, hence we must only show the reverse inclusion.

Suppose that $\cx_{\C}(X) + \cx_{\M}(M) > 2$, and that $\cx_{\M}(M) \ge 2$. Let $\p_1, \dots, \p_t$ be the minimal primes of $\Supp_{\M} (M)$, so that $\VM(M) = Z( \p_1 ) \cup \cdots \cup Z( \p_t )$; recall that these primes are graded, by \cite[Lemma 1.5.6]{BrunsHerzog}. We now construct objects $M_1, \dots, M_t \in \M$ with the property that $\VM(M_i) = Z( \p_i )$ and $\VM(X \ast M_i) \subseteq \VM(X \ast M)$ for each $i$. If $t=1$, we simply take $M_1 = M$. If $t \ge 2$, then fix one of the $\p_i$, and let $\zeta_1, \dots, \zeta_s$ be homogeneous elements in $\Ho (\C)$ with $\p_i = ( \zeta_1, \dots, \zeta_s )$. Each $\zeta_j$ gives a short exact sequence
$$0 \to L_{\zeta_j} \to \Omega_{\C}^{n_j}( \unit ) \to \unit \to 0$$
in $\C$, where $n_j$ is the degree of $\zeta_j$. Now take $M_i = L_{\zeta_s} \ast \cdots \ast L_{\zeta_1} \ast M$. Then from Theorem \ref{thm:fgproperties}(3) we obtain
\begin{eqnarray*}
\VM(M_i) & = & \VM(M) \cap Z( \zeta_1 ) \cap \cdots \cap Z( \zeta_s ) \\
& = & \VM(M) \cap Z \left ( ( \zeta_1, \dots, \zeta_s ) \right ) \\
& = &  \VM(M) \cap Z( \p_i ) \\
& = & Z( \p_i )
\end{eqnarray*}
Next, denote the object $L_{\zeta_j} \ast \cdots \ast L_{\zeta_1} \ast M$ by $N_j$ for $1 \le j \le s$, and put $N_0 = M$. By applying $- \ast N_{j-1}$ to the exact sequence above, we obtain an exact sequence
$$0 \to N_j \to \Omega_{\M}^{n_j}( N_{j-1} ) \oplus Q_j \to N_{j-1} \to 0$$
in $\M$ (for some projective object $Q_j$), on which we apply $X \ast -$ and obtain an exact sequence
$$0 \to X \ast N_j \to \Omega_{\M}^{n_j}( X \ast N_{j-1} ) \oplus P_j \to X \ast N_{j-1} \to 0$$
for some projective object $P_j$. To get the middle terms in these two sequences, we have applied Schanuel's Lemma for abelian categories (see \cite[Lemma 2.2]{BPW}), together with the fact that the module product commutes with syzygies up to projective objects. From the properties listed in Proposition \ref{prop:elementaryproperties}, we now obtain the inclusions
$$\VM(X \ast M_i) = \VM(X \ast N_s) \subseteq \cdots \subseteq \VM(X \ast N_0) = \VM(X \ast M)$$
hence the object $M_i$ has the properties that we wanted. We now claim that if we can show that the inclusion $\VC(X) \cap \VM(M_i) \subseteq \VM(X \ast M_i)$ holds for each $i$, then we are done. Namely, if this is the case, then
\begin{eqnarray*}
\VC(X) \cap \VM(M) & = & \VC(X) \cap \left ( \bigcup_{i=1}^t Z( \p_i ) \right )  \\
& = & \VC(X) \cap \left ( \bigcup_{i=1}^t \VM(M_i) \right )  \\
& = & \bigcup_{i=1}^t \left ( \VC(X) \cap \VM(M_i) \right ) \\
& \subseteq &  \bigcup_{i=1}^t \VM(X \ast M_i) \\
& \subseteq & \VM(X \ast M)
\end{eqnarray*}
This proves the claim.

What remains to show is that the inclusion $\VC(X) \cap \VM(M_i) \subseteq \VM(X \ast M_i)$ holds for each $i$. To do this, note first that $\cx_{\M}(M_i) \le \cx_{\M}(M)$. Namely, the primes $\p_1, \dots, \p_t$ are the minimal ones in $\Supp_{\M}(M)$, whereas $\p_i$ is the only minimal prime in $\Supp_{\M}(M_i)$. Thus $\dim \VM(M)$ is the length of the longest chain in $\Spec \Ho (\C)$ starting with one of the primes $\p_1, \dots, \p_t$, and $\dim \VM(M_i)$ is the length of the longest chain in $\Spec \Ho (\C)$ starting with $\p_i$. Consequently, from Theorem \ref{thm:fgproperties}(1) we see that $\cx_{\M}(M_i) \le \cx_{\M}(M)$.

If $\cx_{\M}(M_i) \le \cx_{\M}(M) -1$, then $\cx_{\C}(X) + \cx_{\M}(M_i) \le \cx_{\C}(X) + \cx_{\M}(M)-1$, and so by induction $\VC(X) \cap \VM(M_i) \subseteq \VM(X \ast M_i)$ holds in this case. If on the other hand $\cx_{\M}(M_i) = \cx_{\M}(M)$, then since $\cx_{\M}(M_i) \ge 2$ and $\Supp_{\M}(M_i)$ contains a unique minimal prime, we can apply Proposition \ref{prop:reducecomplexity}; for each $\m \in \VM(M_i)$ there exists a short exact sequence
$$0 \to W( \m ) \to \Omega_{\M}^{n( \m )}(M_i) \oplus Q( \m ) \to M_i \to 0$$
in which the object $Q( \m )$ is projective, the complexity of the object $W( \m )$ is $\cx_{\M} (M) - 1$, and $\m \in \VM( W( \m ))$. Note that $\VM( W( \m ) ) \subseteq \VM(M_i)$ by Proposition \ref{prop:elementaryproperties}, and consequently that 
$$\bigcup_{\m \in \VM(M_i)} \VM( W( \m ) ) = \VM(M_i)$$ 
since $\m \in \VM( W( \m ))$.

As explained earlier in this proof, when we apply $X \ast -$ to the sequence we just obtained, the result is a short exact sequence
$$0 \to X \ast W( \m ) \to \Omega_{\M}^{n( \m )}(X \ast M_i) \oplus P( \m ) \to X \ast M_i \to 0$$
where the object $P( \m )$ is projective. Using Proposition \ref{prop:elementaryproperties} again, we obtain the inclusion $\VM( X \ast W( \m ) ) \subseteq \VM( X \ast M_i)$, and by induction we also see that $\VC(X) \cap \VM(W( \m )) \subseteq \VM(X \ast W( \m ))$, since $\cx_{\C}(X) + \cx_{\M}(W( \m )) = \cx_{\C}(X) + \cx_{\M}(M)-1$. Combining everything, we now obtain
\begin{eqnarray*}
\VC(X) \cap \VM(M_i) & = & \VC(X) \cap \left ( \bigcup_{\m \in \VM(M_i)} \VM( W( \m ) ) \right ) \\
& = & \bigcup_{\m \in \VM(M_i)} \left ( \VC(X) \cap \VM( W( \m ) ) \right ) \\
& \subseteq & \bigcup_{\m \in \VM(M_i)} \VM(X \ast W( \m )) \\
& \subseteq & \VM \left ( X \otimes M_i \right )
\end{eqnarray*}
This concludes the induction proof in the case when $\cx_{\M}(M) \ge 2$. 

Finally, if $\cx_{\M}(M) = 1$ and $\cx_{\C}(X) \ge 2$, then we use virtually the same arguments to reach the conclusion. Namely, we reduce the complexity of $X$ while keeping the object $M$ fixed. We have shown that (1) follows from (2). 
\end{proof}

Thus in order to verify that the module product property holds for support varieties, it is enough to check that it holds for the indecomposable periodic objects. The following result provides an alternative way of verifying all this, by considering whether certain module products are projective or not.

\begin{theorem}\label{thm:mainalternative}
Let $k$ be a field, and $\left ( \C, \ot, \unit \right )$ a finite braided tensor $k$-category satisfying \emph{\textbf{Fg}}. Furthermore, let $\left ( \M, \ast \right )$ be an exact left $\C$-module category, whose set of isomorphism classes of simple objects is finite. Then the following are equivalent:
\begin{itemize}
\item[(1)] $\VM( X \ast M ) = \VC(X) \cap \VM(M)$ for all objects $X \in \C, M \in \M$;
\item[(2)] For all objects $X \in \C, M \in \M$, if $\VC(X) \cap \VM(M) \neq \{ \m_0 \}$, then $X \ast M$ is not projective;
\item[(3)] For all objects $X \in \C, M \in \M$ of complexity $1$, if $\VC(X) \cap \VM(M) \neq \{ \m_0 \}$, then $X \ast M$ is not projective;
\item[(4)] For all indecomposable periodic objects $X \in \C, M \in \M$, if $\VC(X) \cap \VM(M) \neq \{ \m_0 \}$, then $X \ast M$ is not projective;
\item[(5)] For all nonzero indecomposable periodic objects $X \in \C, M \in \M$ with $\VC(X) = \VM(M)$, the object $X \ast M$ is not projective.
\end{itemize}
\end{theorem}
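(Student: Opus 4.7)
The plan is to establish the cycle of implications $(1) \Rightarrow (2) \Rightarrow (3) \Rightarrow (4) \Rightarrow (5) \Rightarrow (1)$. The first four implications should be short. For $(1) \Rightarrow (2)$: if $\VC(X) \cap \VM(M) \neq \{\m_0\}$, then (1) gives $\VM(X \ast M) \neq \{\m_0\}$, so $X \ast M$ is not projective by Theorem \ref{thm:fgproperties}(2). The implication $(2) \Rightarrow (3)$ is a direct restriction, and $(3) \Rightarrow (4)$ follows from Corollary \ref{cor:periodic}, which identifies nonzero indecomposable periodic objects with those of complexity one. For $(4) \Rightarrow (5)$: if nonzero indecomposable periodic $X \in \C, M \in \M$ satisfy $\VC(X) = \VM(M)$, then by Corollary \ref{cor:periodic} and Theorem \ref{thm:fgproperties}(1) this common variety has dimension one, hence differs from $\{\m_0\}$, and (4) applies.

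The main implication is $(5) \Rightarrow (1)$. My approach is to invoke Theorem \ref{thm:main}, which already reduces the module product property for all objects to the same property for indecomposable periodic objects. So I would fix indecomposable periodic $X \in \C$ and $M \in \M$ (the zero case being trivial) and use Proposition \ref{prop:indecperiodic} to split into two cases. If $\VC(X) \cap \VM(M) = \{\m_0\}$, then Proposition \ref{prop:elementaryproperties}(6) immediately forces $\VM(X \ast M) = \{\m_0\}$ and equality holds. The substantive case is when $\VC(X) = \VM(M)$: here hypothesis (5) gives that $X \ast M$ is not projective, so $\dim \VM(X \ast M) \geq 1$ by Theorem \ref{thm:fgproperties}(1),(2), while Proposition \ref{prop:elementaryproperties}(6) still gives $\VM(X \ast M) \subseteq \VC(X)$.

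The main obstacle will be obtaining the reverse inclusion $\VC(X) \subseteq \VM(X \ast M)$ in this second case; I would argue this via a Krull-dimension argument. Let $\p$ be the unique minimal prime of $\Supp_\C(X)$ furnished by Proposition \ref{prop:indecperiodic}, so $\VC(X) = Z(\p)$ and $\dim \Ho ( \C )/\p = 1$. Since $\VM(X \ast M) \subseteq Z(\p)$, every minimal prime $\q$ of $I_\M(X \ast M)$ must contain $\p$. Because $\Ho ( \C )/\p$ is a finitely generated $k$-algebra domain of Krull dimension one, any prime strictly containing $\p$ is maximal, so each such $\q$ either equals $\p$ or is a maximal ideal. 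To account for $\dim \VM(X \ast M) \geq 1$, at least one of these $\q$ must satisfy $\dim \Ho ( \C )/\q \geq 1$, which forces $\q = \p$. Hence $\p$ appears among the minimal primes of $I_\M(X \ast M)$, giving $Z(\p) \subseteq \VM(X \ast M)$ and the equality $\VM(X \ast M) = \VC(X) \cap \VM(M)$. The only bookkeeping to watch carefully is the passage between $\Spec$ and $\Maxspec$ and between varieties and their defining radical ideals, which is handled under \textbf{Fg} by the results recalled in Section \ref{sec:prelim}.
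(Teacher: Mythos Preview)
Your proposal is correct and follows essentially the same route as the paper's proof: the same chain of implications, the same appeal to Theorem \ref{thm:main} and Proposition \ref{prop:indecperiodic} for $(5)\Rightarrow(1)$, and the same Krull-dimension argument at the end. The only cosmetic difference is that the paper shows directly that $\p=\sqrt{I_{\M}(X\ast M)}$ (arguing that a strict inclusion would give a chain of length $\ge 2$ in $\Supp_{\C}(X)$), whereas you phrase the same step in terms of the minimal primes of $I_{\M}(X\ast M)$; the two formulations are equivalent.
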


\begin{proof}
Suppose that (1) holds, and let $X \in \C, M \in \M$ be objects with $\VC(X) \cap \VM(M) \neq \{ \m_0 \}$. Then $\VM( X \ast M ) \neq \{ \m_0 \}$, hence $X \ast M$ cannot be projective. This shows that (1) implies (2). The implication (2) $\Rightarrow$ (3) is trivial, the implication (3) $\Rightarrow$ (4) follows from Corollary \ref{cor:periodic}, and the implication (4) $\Rightarrow$ (5) follows from the fact that the support variety of a nonzero periodic object is non-trivial by Theorem \ref{thm:complexityone}.

Finally, suppose that (5) holds. By Theorem \ref{thm:main}, in order to show that (1) holds, it is enough to show that $\VM( X \ast M ) = \VC(X) \cap \VM(M)$ for all indecomposable periodic objects $X \in \C, M \in \M$. This is trivially true if either $X$ or $M$ is zero, so suppose that they are both nonzero, indecomposable and periodic. If $\VC(X) \cap \VM(M) = \{ \m_0 \}$, then $\VM ( X \ast M ) = \{ \m_0 \}$ by Proposition \ref{prop:elementaryproperties}(6), and we are done. If $\VC(X) \cap \VM(M) \neq \{ \m_0 \}$, then $\VC(X) = \VM(M)$ by Proposition \ref{prop:indecperiodic}, and so by assumption the object $X \ast M$ is not projective. Then $\dim \VM( X \ast M ) \ge 1$ by (1) and (2) of Theorem \ref{thm:fgproperties}, that is, the Krull dimension of $\Ho (\C) /  I_{\M}(X \ast M)$, and therefore also of $\Ho (\C) / \sqrt{I_{\M}(X \ast M)}$, is at least $1$. Now apply Proposition \ref{prop:indecperiodic} once more; let $\p$ be the unique minimal prime of $\Supp_{\C}(X)$, so that $\VC(X) = Z( \p ) = \VM(M)$. Then 
$$Z \left ( I_{\M}(X \ast M) \right ) = \VM(X \ast M) \subseteq \VC(X) = Z( \p )$$
by  Proposition \ref{prop:elementaryproperties}(6), giving $\p \subseteq \sqrt{I_{\M}(X \ast M)}$ by \cite[Theorem 25]{Matsumura}. If this inequality is strict, then $\Supp_{\C}(X)$ contains a strictly increasing chain of length at least $2$, since $\dim \Ho (\C) / \sqrt{I_{\M}(X \ast M)} \ge 1$. This is impossible since $\dim \VC(X) =1$ by Theorem \ref{thm:complexityone}, hence $\p = \sqrt{I_{\M}(X \ast M)}$. But then
$$\VM(X \ast M) = Z \left ( \sqrt{I_{\M}(X \ast M)} \right ) = Z( \p ) = \VC(X) \cap \VM(M)$$
and we are done.
\end{proof}

\section{Skew group algebras and symmetric tensor categories}\label{sec:symmetric}

In this section, we apply the results of Section \ref{sec:tensor},
specifically Theorem~\ref{thm:mainalternative}(5), to 
categories of finite dimensional representations of certain 
skew group algebras. 
For these tensor categories, the finiteness condition \textbf{Fg} holds, 
and we shall see that the tensor product property holds for support varieties. 
Using Deligne's classification theorem from \cite{Deligne2}, we obtain 
as a special case an important class of examples, 
namely the finite symmetric tensor categories over 
algebraically closed ground fields of characteristic zero, 
giving a new proof of the tensor product property for these categories
(cf.~Drupieski and Kujawa~\cite[Corollary 3.2.4]{DK}).
In case the group has order two and the characteristic of $k$ is odd,
our result should also be compared with Benson, Iyengar, Krause, and 
Pevtsova~\cite[Theorems 8.10 and 9.3]{BIKP}.

The skew group algebras in which we are interested arise from group actions on exterior algebras, so let us fix some notation that we will use throughout this section. Let $k$ be a field, $c$ a positive integer, and $\Lambda$ the exterior algebra in $c$ indeterminates $x_1, \dots, x_c$ over $k$:
$$\Lambda = k \langle x_1, \dots, x_c \rangle / (x_i^2, x_ix_j + x_jx_i)$$
Furthermore, let $G$ be a finite group acting on $\Lambda$, via a homomorphism into the group of algebra automorphisms of $\Lambda$. We may then form the skew group algebra $\Lambda \rtimes G$. As a $k$-vector space, this is just $\Lambda \otimes_k kG$, which is finite dimensional, and every element is of the form $\sum_{g \in G} w_g \otimes g$ for some $w_g \in \Lambda$. Multiplication is defined by
$$(w_1 \otimes g_1)(w_2 \otimes g_2) = w_1(^{g_1}w_2) \otimes g_1g_2$$ 
for $w_i \in \Lambda$ and $g_i \in G$. The skew group algebra is often also called the smash product algebra, and then typically denoted by $\Lambda \# kG$. If the characteristic of $k$ does not divide the order of $G$, then since exterior algebras are selfinjective, it follows from \cite[Theorem 1.1 and Theorem 1.3]{ReitenRiedtmann} that $\Lambda \rtimes G$ is also selfinjective. Finally, note that the natural inclusion $\Lambda \to \Lambda \rtimes G$ given by $w \mapsto w \ot e$ (where $e$ is the identity element of $G$) turns $\Lambda \rtimes G$ into a left and right $\Lambda$-module, in both cases free of rank $| G |$.

\begin{remark}\label{rem:fg}
(1) Suppose that $\Lambda \rtimes G$ happens to be a Hopf algebra, and that the characteristic of $k$ does not divide the order of $G$. Then the finite tensor category $\mod ( \Lambda \rtimes G )$ of finitely generated left modules satisfies \textbf{Fg}. To see this, denote the algebra by $H$. By \cite[Theorem 4.1(2)]{Bergh}, the Hochschild cohomology ring $\Hoch^*( H )$ is Noetherian, and for every $H$-bimodule $X$, the right $\Hoch^*(H)$-module $\Ext_{H^e}^*(H,X)$ is finitely generated (here $H^e$ denotes the enveloping algebra $H \otimes_k H^{\op}$). By \cite[Lemma 3.2]{Bergh}, this implies that $\Ext_H^*(M,M)$ is a finitely generated $\Hoch^*(H)$-module, for every finitely generated left $H$-module $M$. Finally, by \cite[Lemma 4.2]{Bergh}, this in turn implies that the finite tensor category $\mod H$ satisfies \textbf{Fg}.

(2) Given any ring $R$ together with an automorphism $\psi \colon R \to R$, we may twist a left module $X$ and obtain a module ${_{\psi}X}$. The underlying abelian group is the same, but the module action becomes $r \cdot x = \psi (r) x$ for $r \in R$ and $x \in X$. There is an isomorphism ${_{\psi}X} \simeq {_{\psi}R} \ot_R X$, hence twisting induces an exact functor. In particular, for $\Lambda$ and $G$, every $g \in G$ acts on the cohomology ring $\Ext_{\Lambda}^*(k,k)$ by twisting of extensions. That is, given a homogeneous element $\eta$ realized as an extension
$$0 \to k \xrightarrow{f_0} X_1 \xrightarrow{f_1} \cdots \xrightarrow{f_{n-1}} X_n \xrightarrow{f_n} k \to 0$$
we obtain the element $^g\eta$ realized as the extension
$$0 \to k \xrightarrow{f_0} {_gX_1} \xrightarrow{f_1} \cdots \xrightarrow{f_{n-1}} {_gX_n} \xrightarrow{f_n} k \to 0$$
Here we have used the notation ${_gX}$ for the $\Lambda$-module obtained from $X$ by twisting with the automorphism on $\Lambda$ given by $g$; note that ${_gk} = k$.

(3) Suppose, as in (1), that $H = \Lambda \rtimes G$ is a Hopf algebra, and that the characteristic of $k$ does not divide the order of $G$. Then the cohomology ring $\Ext_H^*(k,k)$ is isomorphic to the $G$-invariant subring $\Ext_{\Lambda}^*(k,k)^{G}$ of $\Ext_{\Lambda}^*(k,k)$, via the restriction map
$$\Ext_H^{*}(k,k) \xrightarrow{\tau_{H, \Lambda}^*(k,k)} \Ext_{\Lambda}^{*}(k,k)$$
see, for example, \cite[Theorem 2.17]{StefanVay}.
\end{remark}

The following lemma shows that if we take any subalgebra of 
$\Lambda \rtimes G$ containing the exterior algebra $\Lambda$, 
then restriction of cohomology is injective.

\begin{lemma}\label{lem:restriction}
If the characteristic of $k$ does not divide the order of $G$, then for any algebra $A$ with $\Lambda \subseteq A \subseteq \Lambda \rtimes G$, and any pair of $\Lambda \rtimes G$-modules $M,N$, the restriction map
$$\Ext_{\Lambda \rtimes G}^{*}(M,N) \xrightarrow{\tau_{\Lambda \rtimes G,A}^*(M,N)} \Ext_{A}^{*}(M,N)$$
is injective.
\end{lemma}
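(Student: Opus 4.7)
The plan is to reduce the statement to the extreme case $A = \Lambda$ via a trivial factorization, and then run a standard averaging / transfer argument. At the level of Yoneda extensions, restricting structure from $\Lambda \rtimes G$ down to $\Lambda$ can be done in one jump or in two, so
$$\tau_{\Lambda \rtimes G, \Lambda}^{*}(M,N) \; = \; \tau_{A, \Lambda}^{*}(M,N) \circ \tau_{\Lambda \rtimes G, A}^{*}(M,N).$$
If the composite is injective, then the first factor $\tau_{\Lambda \rtimes G, A}^{*}(M,N)$ is injective as well. Therefore it is enough to settle the special case $A = \Lambda$.

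Writing $H = \Lambda \rtimes G$, I would take any projective resolution $P_{\bu} \to M$ in $\mod H$. Since $H$ is free of rank $|G|$ over $\Lambda$, each projective left $H$-module is free as a left $\Lambda$-module; hence $P_{\bu} \to M$ is also a projective resolution in $\mod \Lambda$. Both Ext groups can then be computed from the same complex, and the map $\tau_{H, \Lambda}^{*}(M,N)$ is the one induced on cohomology by the natural inclusion of cochain complexes
$$\Hom_{H}(P_{\bu}, N) \; \hookrightarrow \; \Hom_{\Lambda}(P_{\bu}, N).$$

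The key step is to build a chain-level left inverse of this inclusion by averaging. For $f \in \Hom_{\Lambda}(P_i, N)$ and $g \in G$, set $({}^{g} f)(p) = g \cdot f(g^{-1} p)$. Using the skew-commutation rule $g v = ({}^{g} v) g$ for $v \in \Lambda$, one checks directly that ${}^{g} f$ remains $\Lambda$-linear, and then that the averaged map
$$\operatorname{tr}(f) \; = \; \frac{1}{|G|} \sum_{g \in G} {}^{g} f,$$
well-defined thanks to the hypothesis on $\operatorname{char}(k)$, is in addition $G$-equivariant, hence $H$-linear. Since the differentials on $P_{\bu}$ are $H$-linear, the operator $\operatorname{tr}$ commutes with the Hom-complex differential. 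Finally, if $f$ is already $H$-linear then ${}^{g} f = f$ for every $g$, so $\operatorname{tr}(f) = f$. Thus $\operatorname{tr}$ is a chain-level retraction of the inclusion above, which forces the induced map $\tau_{H, \Lambda}^{*}(M,N)$ on cohomology to be injective. Combined with the factorization from the first paragraph, this gives the lemma.

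The only potential obstacle is the bookkeeping in the transfer construction: one must verify that the conjugation-style twist $f \mapsto {}^{g} f$ really does preserve $\Lambda$-linearity (a consequence of $G$ normalizing $\Lambda$ inside $H$ via the skew-commutation relation) and that after averaging one gains the extra $G$-equivariance needed for full $H$-linearity. Both checks are routine, but they are the part of the argument where the specific structure of $\Lambda \rtimes G$, together with the invertibility of $|G|$ in $k$, actually enters.
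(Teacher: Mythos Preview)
Your argument is correct. Both you and the paper begin with the same factorization $\tau_{H,\Lambda}^* = \tau_{A,\Lambda}^* \circ \tau_{H,A}^*$ to reduce to the case $A=\Lambda$, but you then diverge from the paper's approach. You construct an explicit chain-level retraction via the Reynolds-type averaging operator $\operatorname{tr}(f) = \frac{1}{|G|}\sum_g {}^g f$, which makes the argument entirely self-contained. The paper instead works at the level of Yoneda extensions: it views restriction as $H \otimes_H \theta$ (with $H$ as a $\Lambda$-$H$-bimodule), then induces back up to get $H \otimes_\Lambda H \otimes_H \theta$, and invokes the Reiten--Riedtmann result that $H$ is an $H$-bimodule summand of $H \otimes_\Lambda H$ to conclude that $\theta$ is a summand of the induced-restricted extension. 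The two arguments are morally the same---the bimodule splitting in Reiten--Riedtmann is itself produced by the very averaging map you wrote down---but your version is more elementary and avoids the external citation, while the paper's version is shorter and fits the separable-equivalence framework used elsewhere in the article.
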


\begin{proof}
Let us denote $\Lambda \rtimes G$ by $H$. The composition of restriction maps
$$\Ext_{H}^{*}(M,N) \xrightarrow{\tau_{H,A}^*(M,N)} \Ext_{A}^{*}(M,N) \xrightarrow{\tau_{A, \Lambda}^*(M,N)} \Ext_{\Lambda}^{*}(M,N)$$
equals the restriction map $\tau_{H, \Lambda}^*(M,N)$ from $H$ to $\Lambda$. It therefore suffices to show that the latter is injective. If $\theta \in \Ext_{H}^{n}(M,N)$ for some $n$, then its restriction to $\Ext_{\Lambda}^{n}(M,N)$ is $H \ot_{H} \theta$, where we view $H$ as a $\Lambda$-$H$-bimodule. Inducing back to $H$, we obtain the element $H \ot_{\Lambda} H \ot_H \theta \in \Ext_{H}^{n}( H \ot_{\Lambda} H \ot_H M, H \ot_{\Lambda} H \ot_H N)$, where we view the leftmost $H$ in the tensor products as an $H$-$\Lambda$-bimodule. By \cite[Theorem 1.1]{ReitenRiedtmann}, $H$ is a direct summand of $H \ot_{\Lambda} H$ as a bimodule over itself, and so we see that $\theta$ is a direct summand of $H \ot_{\Lambda} H \ot_H \theta$. This shows that the restriction map from $\Ext_{H}^{*}(M,N)$ to $\Ext_{\Lambda}^{*}(M,N)$ is injective.
\end{proof}

Suppose now that the characteristic of $k$ is not $2$, and let $C_2$ be a (multiplicative) group of order $2$, say $C_2 = \{ e,h \}$ with $h^2 = e$. This group acts on $\Lambda$ by defining $^hx_i = -x_i$ for each $i$. From now on, we set $$A = \Lambda \rtimes C_2.$$ As a $k$-algebra, it is isomorphic to the algebra generated by $h,x_1, \dots, x_c$, with relations $h^2=1, x_i^2 = 0, x_ix_j+x_jx_i=0$ and $hx_i+x_ih=0$. We see that it is a Hopf algebra by defining a comultiplication $\Delta$, antipode $S$ and counit $\varepsilon$ as follows: $\Delta (h) = h \otimes h, \Delta (x_i) = x_i \otimes 1 + h \otimes x_i, S(h)=h, S(x_i) = -hx_i, \varepsilon (h) =1$ and $\varepsilon (x_i) =0$. 

The finite tensor category $\mod A$ of finitely generated left $A$-modules is symmetric. To see this, take two modules $M,N \in \mod A$, and decompose them into subspaces
$$M = M_0 \oplus M_1, \hspace{5mm} N = N_0 \oplus N_1$$
given by eigenspaces for the action of $h$; this is possible since the characteristic of $k$ is not $2$. Thus $hm_0 = m_0$ and $hm_1 = -m_1$ whenever $m_i \in M_i$, and similarly for $N$. One now checks that the map $M \ot N \to N \ot M$ given by
$$m_i \ot n_j \mapsto (-1)^{ij} n_j \ot m_i$$
is a functorial isomorphism, and it squares to the identity. Hence $\mod A$ is symmetric. Moreover, by Remark \ref{rem:fg}(1), it also satisfies \textbf{Fg}. For a module $M \in \mod A$, we shall denote the support variety $V_{\mod A}(M)$ by just $V_A(M)$; these are defined in terms of the maximal ideal spectrum of the (commutative) even degree cohomology ring $\Ext_A^{2*}(k,k)$. We denote by $\m_0$ the unique graded maximal ideal of this ring.

\begin{remark}\label{rem:evendegrees}
By Remark \ref{rem:fg}(3), the cohomology ring $\Ext_A^*(k,k)$ is isomorphic to $\Ext_{\Lambda}^*(k,k)^{C_2}$ via the restriction map
$$\Ext_A^{*}(k,k) \xrightarrow{\tau_{A, \Lambda}^*(k,k)} \Ext_{\Lambda}^{*}(k,k)$$
The action of $C_2$ on $\Ext_{\Lambda}^{*}(k,k)$ is quite simple: the generator $h \in C_2$ acts as $(-1)^n$ on $\Ext_{\Lambda}^n(k,k)$. This can be seen from the action of $h$ on the Koszul resolution of $k$ in degree $n$, induced by the action of $h$ on each $x_i$ as multiplication by $-1$. Thus $\Ext_{\Lambda}^*(k,k)^{C_2}$ is nothing but the even degree subspace $\Ext_{\Lambda}^{2*}(k,k)$ of $\Ext_{\Lambda}^*(k,k)$. In particular, we see that $\Ext_A^n(k,k) = 0$ for odd $n$, so that $\Ext_A^*(k,k) = \Ext_A^{2*}(k,k)$.
\end{remark}

Now take a $c$-tuple $\lambda = ( \lambda_1, \dots, \lambda_c ) \in k^c$, and denote the element $\lambda_1x_1 + \cdots + \lambda_cx_c$ of $\Lambda$ by $u_{\lambda}$. Then $u_{\lambda}^2 = 0$, and so the subalgebra $k[ u_{\lambda} ]$ generated by $u_{\lambda}$ is isomorphic to the truncated polynomial ring $k[y]/(y^2)$ whenever $\lambda$ is nonzero. For every such $c$-tuple $\lambda$, the algebra $\Lambda$ is free as a left and as a right module over the subalgebra $k[ u_{\lambda} ]$; this follows, for example, from \cite[Theorem 2.6]{BensonErdmannHolloway}. Combining with the above, we see that the same holds for the algebra $A$. 

\sloppy The inclusion $k[ u_{\lambda} ] \to A$ gives a restriction map
$$\Ext_A^{*}(k,k) \xrightarrow{\tau_{A, \lambda}^*(k,k)} \Ext_{k[ u_{\lambda} ]}^{*}(k,k)$$
We denote by $\tau_{A, \lambda}^{2*}(k,k)$ the restriction of this map to the even cohomology ring $\Ext_A^{2*}(k,k)$. Of course, since $\Ext_A^*(k,k) = \Ext_A^{2*}(k,k)$, we have not in practice restricted the map $\tau_{A, \lambda}^{*}(k,k)$ to a subalgebra. The first result we prove is that when $\lambda$ is a nonzero $c$-tuple, then the kernel of this map is a graded prime ideal of $\Ext_A^{2*}(k,k)$. Moreover, two $c$-tuples give rise to different prime ideals if and only if they are not on the same line.

\begin{lemma}\label{lem:kernel}
For every nonzero $c$-tuple $\lambda \in k^c$, the ideal $\Ker \tau_{A,\lambda}^{2*}(k,k)$ is a graded prime ideal of $\Ext_A^{2*}(k,k)$, different from $\m_0$. Moreover, if $\mu$ is another nonzero $c$-tuple, then $\Ker \tau_{A,\lambda}^{2*}(k,k) = \Ker \tau_{A,\mu}^{2*}(k,k)$ if and only if $\mu = \alpha \lambda$ for some \emph{(}necessarily nonzero\emph{)} scalar $\alpha \in k$.
\end{lemma}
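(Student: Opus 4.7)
The plan is to make both $\Ext_A^{2*}(k,k)$ and the restriction map $\tau_{A,\lambda}^{2*}(k,k)$ completely explicit as polynomial data. By Remark~\ref{rem:evendegrees}, the restriction $\tau_{A,\Lambda}^*(k,k)$ identifies $\Ext_A^{2*}(k,k)$ with $\Ext_\Lambda^{2*}(k,k)$, and Koszul duality for the exterior algebra gives $\Ext_\Lambda^*(k,k) \cong k[X_1,\dots,X_c]$ as graded algebras, with each $X_i$ of degree $1$ corresponding to the dual of $x_i$. Since $u_\lambda \neq 0$, one has $k[u_\lambda] \cong k[y]/(y^2)$, so $\Ext_{k[u_\lambda]}^*(k,k) \cong k[z]$ with $\deg z = 1$. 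By functoriality of restriction, $\tau_{A,\lambda}^*(k,k)$ factors through $\tau_{A,\Lambda}^*(k,k)$, and a direct computation on $\Ext^1$ (classifying $2$-dimensional extensions $0 \to k \to E \to k \to 0$ by the action $x_i \cdot e_1 = \alpha_i e_0$, which restricts to $u_\lambda \cdot e_1 = (\sum_i \lambda_i \alpha_i) e_0$) shows that the further restriction $\Ext_\Lambda^*(k,k) \to \Ext_{k[u_\lambda]}^*(k,k)$ is the $k$-algebra map $X_i \mapsto \lambda_i z$. Under these identifications, $\tau_{A,\lambda}^{2*}(k,k)$ sends a homogeneous polynomial $f \in k[X_1,\dots,X_c]$ of even degree $n$ to $z^n f(\lambda_1,\dots,\lambda_c) \in k[z^2]$.

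With this description in hand, the first claim is immediate. The target $k[z^2]$ is an integral domain and the map is graded, so $\Ker \tau_{A,\lambda}^{2*}(k,k)$ is a graded prime ideal. Choosing any index $i$ with $\lambda_i \neq 0$ (possible since $\lambda \neq 0$), the element $X_i^2 \in \m_0$ maps to $\lambda_i^2 z^2 \neq 0$, so the kernel is properly contained in $\m_0$.

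For the second claim, the easy direction is that if $\mu = \alpha\lambda$ with $\alpha \neq 0$, then $f(\mu) = \alpha^n f(\lambda)$ for every homogeneous $f$ of degree $n$, so the kernels agree. For the converse, I would exhibit explicit degree-two polynomials that pin down the direction of $\lambda$: for any indices $i,j,r \in \{1,\dots,c\}$, the polynomial
\[
P_{ij,r}(X) = (\mu_i X_j - \mu_j X_i)\, X_r
\]
is homogeneous of even degree $2$ and vanishes at $\mu$, hence lies in $\Ker \tau_{A,\mu}^{2*}(k,k)$. Assuming equality of kernels, $P_{ij,r}(\lambda) = (\mu_i \lambda_j - \mu_j \lambda_i)\lambda_r = 0$ for all $i,j,r$; choosing $r$ with $\lambda_r \neq 0$ yields $\mu_i \lambda_j = \mu_j \lambda_i$ for all $i,j$, which is exactly the condition that $\mu$ and $\lambda$ lie on the same line through the origin, and picking any $i$ with $\lambda_i \neq 0$ produces the scalar $\alpha = \mu_i/\lambda_i$.

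The main obstacle is justifying rigorously the identification of the composite restriction $\Ext_A^{2*}(k,k) \to \Ext_{k[u_\lambda]}^{2*}(k,k)$ with the evaluation map $f \mapsto z^n f(\lambda)$, especially the Koszul-level computation showing that $X_i$ restricts to $\lambda_i z$. Once that explicit model is established, both parts of the lemma reduce to elementary facts about vanishing of homogeneous polynomials at points of $k^c$.
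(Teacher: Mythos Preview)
Your argument is correct, and it proceeds by a genuinely different route from the paper's. The paper never invokes the Koszul-dual polynomial model explicitly: instead, for the ``only if'' direction it constructs by hand an element $\eta \in \Ext_{\Lambda}^1(k,k)$ (via a linear functional $\rho$ orthogonal to $\lambda$ but not to $\mu$, realised as a map $I \to k$ from the augmentation ideal), verifies that $\tau_{\Lambda,\lambda}^*(\eta)=0$ while $\tau_{\Lambda,\mu}^*(\eta)\neq 0$, squares it to land in even degree, and then lifts $\eta^2$ back to $\Ext_A^2(k,k)$ via Remark~\ref{rem:evendegrees}. Your approach packages all of this at once: under the identification $\Ext_A^{2*}(k,k) \cong k[X_1,\dots,X_c]^{\mathrm{even}}$ the restriction becomes evaluation at $\lambda$, the paper's $\eta$ is simply the linear form $\sum_i \rho_i X_i$, and your separating polynomials $P_{ij,r}$ play the role of its square. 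What your route buys is transparency---the kernel is visibly the homogeneous vanishing ideal of the line $k\lambda$, so primeness, properness in $\m_0$, and the biconditional on lines all become one-line polynomial facts---at the cost of importing the Koszul identification and the claim that restriction is multiplicative and determined by degree one. The paper's route is more self-contained but correspondingly longer. Your acknowledged ``obstacle'' is not a real gap: both rings are generated in degree~$1$, restriction is a graded ring map, and your $\Ext^1$ computation with two-term extensions is exactly what is needed to pin down $X_i \mapsto \lambda_i z$.
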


\begin{proof}
Let $\lambda$ be a nonzero $c$-tuple in $k^c$. Since $k[ u_{\lambda} ]$ is isomorphic to the truncated polynomial ring $k[y]/(y^2)$, the cohomology ring $\Ext_{k[ u_{\lambda} ]}^{*}(k,k)$ is isomorphic to a polynomial ring $k[z]$ with $z$ in degree one. In particular, the even cohomology ring $\Ext_{k[ u_{\lambda} ]}^{2*}(k,k)$ is an integral domain. It follows that if $\eta$ and $\theta$ are elements of $\Ext_A^{2*}(k,k)$ with $\eta \theta \in \Ker \tau_{A,\lambda}^{2*}(k,k)$, then either $\eta \in \Ker \tau_{A,\lambda}^{2*}(k,k)$ or $\theta \in \Ker \tau_{A,\lambda}^{2*}(k,k)$, since the restriction map is a ring homomorphism. Thus $\Ker \tau_{A,\lambda}^{2*}(k,k)$ is a prime ideal since it is proper (it does not contain the identity element $1 \in \Hom_A(k,k)$, for example). 

Now take another nonzero $c$-tuple $\mu \in k^c$. If $\mu = \alpha \lambda$ for some $\alpha \in k$, then $u_{\mu} = \alpha u_{\lambda}$, and so $k[ u_{\mu} ] = k[ u_{\lambda} ]$ as subalgebras of $A$. Then trivially $\Ker \tau_{A,\lambda}^{2*}(k,k) = \Ker \tau_{A,\mu}^{2*}(k,k)$. Note that when $c=1$, then $\mu$ must be on the same line as $\lambda$.

Conversely, suppose that $\lambda$ and $\mu$ are not on the same line (so $c$ must be at least $2$), and consider the linear map $\phi_{\lambda} \colon k^c \to k$ given by $\rho \mapsto \langle \lambda, \rho \rangle$, where $\langle \lambda, \rho \rangle = \lambda_1\rho_1 + \cdots + \lambda_c \rho_c$. This map is surjective since $\lambda$ is nonzero, and so $\Ker \phi_{\lambda}$ is of dimension $c-1$. Now choose a basis for $\Ker \phi_{\lambda}$, and consider the $(c-1) \times c$-matrix $E$ whose rows are these $c$-tuples, in any order. The rank of $E$ is $c-1$, and so its null space is of dimension one, and contains $\lambda$. Since $\mu$ is not on the same line as $\lambda$, it cannot belong to the nullspace, i.e.\ $E \mu \neq 0$. Consequently, there exists a $c$-tuple $\rho \in k^c$ with $\langle \lambda, \rho \rangle = 0$ and $\langle \mu, \rho \rangle \neq 0$ (for example, one of the rows of $E$ has this property). Choose one such $c$-tuple $\rho$.

Consider the projective cover
$$0 \to I \to \Lambda \to k \to 0$$
of $k$ as a left $\Lambda$-module, where $I$ is the left ideal $(x_1, \dots, x_c) \subseteq \Lambda$. Furthermore, look at the map $I \to k$ given by
$$\beta_1x_1 + \cdots + \beta_cx_c + w \mapsto \langle \beta, \rho \rangle$$
for $w \in I^2$ and $\beta = ( \beta_1, \dots, \beta_c )$. This map is a $\Lambda$-homomorphism mapping $u_{\lambda}$ to zero and $u_{\mu}$ to something nonzero, and does not factor through $\Lambda$. Consequently, it represents a nonzero element $\eta \in \Ext_{\Lambda}^1(k,k)$. Now for any nonzero $c$-tuple $\sigma \in k^c$, the ideal $I$ decomposes over $k[ u_{\sigma} ]$ as $( u_{\sigma} ) \oplus Q_{\sigma} $, for some free $k[ u_{\sigma} ]$-module $Q_{\sigma} $. Furthermore, the restriction map
$$\Ext_{\Lambda}^{*}(k,k) \xrightarrow{\tau_{\Lambda, \sigma}^{*}(k,k)} \Ext_{k[ u_{\sigma} ]}^{*}(k,k)$$
maps $\eta$ to the element of $\Ext_{k[ u_{\sigma} ]}^{1}(k,k)$ represented by the map $( u_{\sigma} ) \oplus Q_{\sigma} \to k$ given by $\alpha u_{\sigma} + q \mapsto \alpha \langle \sigma, \rho \rangle$ for $\alpha \in k$ and $q \in Q_{\sigma}$. Then $\tau_{\Lambda,\lambda}^{*}(k,k)( \eta ) =0$, whereas $\tau_{\Lambda,\mu}^{*}(k,k)( \eta ) \neq 0$ since $\langle \mu, \rho \rangle \neq 0$, and the $k[ u_{\mu} ]$-homomorphism $( u_{\mu} ) \oplus Q_{\mu} \to k$ above does not factor through $\Lambda$. The restriction maps are ring homomorphisms, hence $\tau_{\Lambda,\lambda}^{*}(k,k)( \eta^2 ) =0$ and $\tau_{\Lambda,\mu}^{*}(k,k)( \eta^2 ) \neq 0$, the latter because $\Ext_{k[ u_{\mu} ]}^{*}(k,k)$ is an integral domain.

For every nonzero $c$-tuple $\sigma \in k^c$, the inclusions $k[ u_{\sigma} ] \to \Lambda \to A$ of $k$-algebras induce the sequence
$$\Ext_A^{*}(k,k) \xrightarrow{\tau_{A, \Lambda}^*(k,k)} \Ext_{\Lambda}^*(k,k) \xrightarrow{\tau_{\Lambda, \sigma}^*(k,k)} \Ext_{k[ u_{\sigma} ]}^{*}(k,k)$$
of restriction maps. The composition equals the restriction map $\tau_{A, \sigma}^*(k,k)$. Now by Remark \ref{rem:evendegrees}, the restriction map
$$\Ext_A^{2*}(k,k) \xrightarrow{\tau_{A, \Lambda}^{2*}(k,k)} \Ext_{\Lambda}^{2*}(k,k)$$
is an isomorphism, and so the element $\eta^2 \in \Ext_{\Lambda}^2(k,k)$ belongs to the image of $\tau_{A, \Lambda}^*(k,k)$, where $\eta \in \Ext_{\Lambda}^1(k,k)$ is the element from above. Choosing an element $\theta \in \Ext_A^2(k,k)$ such that $\tau_{A, \Lambda}^*(k,k)( \theta ) = \eta^2$, we obtain
$$\tau_{A, \sigma}^{2*}(k,k)( \theta ) = \tau_{A, \sigma}^*(k,k)( \theta ) = \tau_{\Lambda, \sigma}^*(k,k) \left ( \tau_{A, \Lambda}^*(k,k)( \theta ) \right ) = \tau_{\Lambda, \sigma}^*(k,k) ( \eta^2 )$$
for every nonzero $c$-tuple $\sigma \in k^c$. We showed above that $\tau_{\Lambda,\lambda}^{*}(k,k)( \eta^2 ) =0$ whereas $\tau_{\Lambda,\mu}^{*}(k,k)( \eta^2 ) \neq 0$, and so $\theta$ is an element of $\Ker \tau_{A, \lambda}^{2*}(k,k)$, but not of $\Ker \tau_{A, \mu}^{2*}(k,k)$. This shows that $\Ker \tau_{A, \lambda}^{2*}(k,k)$ does not equal $\Ker \tau_{A, \mu}^{2*}(k,k)$ when $\lambda$ and $\mu$ are not on the same line.

Finally, we prove that $\Ker \tau_{A, \lambda}^{2*}(k,k)$ does not equal the graded maximal ideal $\m_0$ of $\Ext_A^{2*}(k,k)$. If $c=1$, then $u_{\lambda}$ is just the generator $x_1$ multiplied with a nonzero scalar, and so $k[ u_{\lambda} ] = \Lambda$ in this case. The restriction map from $\Ext_A^{2*}(k,k)$ to $\Ext_{\Lambda}^{2*}(k,k)$ is an isomorphism, hence $\Ker \tau_{A, \lambda}^{2*}(k,k) = 0 \neq \m_0$. When $c \ge 2$, we proved above that for the $c$-tuple $\mu$ the element $\eta^2 \in \Ext_A^2(k,k)$ did not belong to $\Ker \tau_{A, \mu}^{2*}(k,k)$. Thus $\Ker \tau_{A, \mu}^{2*}(k,k) \neq \m_0$, and by switching the roles of $\lambda$ and $\mu$ we see that also $\Ker \tau_{A, \lambda}^{2*}(k,k) \neq \m_0$.
\end{proof}

We now turn our attention to a class of $A$-modules whose support varieties are determined by the prime ideals of $\Ext_A^{2*}(k,k)$ considered in the lemma. Namely, for a nonzero $c$-tuple $\lambda \in k^c$, denote the left $A$-module $A( u_{\lambda} \ot e )$ by just $Au_{\lambda}$. Analogues of these modules have been used earlier, in particular in connection with rank varieties; see \cite{BensonErdmannHolloway, BerghErdmann, PevtsovaWitherspoon}. In the following result, we establish the properties that we need for $Au_{\lambda}$; see also \cite[Section 2]{PevtsovaWitherspoon}. Recall that $\stHom$ denotes the quotient of the space of homomorphisms by the subspace of those factoring through a projective module. 

\begin{proposition}\label{prop:periodictestmodule}
For every nonzero $c$-tuple $\lambda \in k^c$, the following hold.

\emph{(1)} The $A$-module $Au_{\lambda}$ is $1$-periodic, i.e.\ $\Omega_A^1 ( Au_{\lambda} ) \simeq Au_{\lambda}$. Moreover, it is isomorphic to the induced module $A \ot_{k[u_{\lambda}]} k$.

\emph{(2)} A module $M \in \mod A$ is free as a $k[u_{\lambda}]$-module if and only if $\stHom_A( Au_{\lambda},M )=0$.

\emph{(3)} $\Ext_A^n(Au_{\lambda},k) \neq 0$ for every positive integer $n$, and the restriction map
$$\Ext_A^* \left ( Au_{\lambda},k \right ) \xrightarrow{\tau_{A, \lambda}^*(Au_{\lambda},k)} \Ext_{k[u_{\lambda}]}^* \left ( Au_{\lambda},k \right )$$
is injective in every positive degree.

\emph{(4)} $V_A ( Au_{\lambda} ) = Z( \Ker \tau_{A,\lambda}^{2*}(k,k) )$, and this variety is irreducible.
\end{proposition}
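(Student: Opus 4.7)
My plan rests on the identification $Au_\lambda \simeq A \otimes_{k[u_\lambda]} k$, and for this I first establish that $A$ is free over $k[u_\lambda]$ on both sides. Since $\lambda \neq 0$, I extend $u_\lambda$ to a basis $u_\lambda, v_2, \ldots, v_c$ of the degree-one part of $\Lambda$, giving a factorization $\Lambda \simeq \bigwedge(u_\lambda) \otimes_k \bigwedge(v_2, \ldots, v_c)$; then $\Lambda$, and hence $A = \Lambda \oplus \Lambda h$, is free of rank $2^c$ over $k[u_\lambda]$. For (1), the $A$-linear map $A \otimes_{k[u_\lambda]} k \to A u_\lambda$ given by $a \otimes 1 \mapsto au_\lambda$ is well-defined and surjective, and a dimension count forces it to be an isomorphism. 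Right multiplication by $u_\lambda$ on $A$ then yields the short exact sequence
\[
0 \to Au_\lambda \hookrightarrow A \xrightarrow{\,\cdot u_\lambda\,} Au_\lambda \to 0,
\]
exact because kernel and image of multiplication by $y$ in $k[y]/(y^2)$ coincide. Since the two indecomposable projective $A$-modules are non-isomorphic (they cover the two distinct simples of $A$), $A$ cannot decompose as two copies of a single indecomposable; hence $Au_\lambda$ is non-projective, and Schanuel's lemma combined with Krull--Schmidt yields $\Omega_A^1(Au_\lambda) \simeq Au_\lambda$.

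For (2), Frobenius reciprocity transports $\Hom_A(Au_\lambda, M)$ to $\Hom_{k[u_\lambda]}(k, M|_{k[u_\lambda]})$, and this identification takes maps factoring through an $A$-projective precisely to those factoring through a $k[u_\lambda]$-projective — in one direction because restriction of an $A$-projective is $k[u_\lambda]$-free, and in the other because $A \otimes_{k[u_\lambda]} -$ preserves projectivity. The induced isomorphism $\stHom_A(Au_\lambda, M) \simeq \stHom_{k[u_\lambda]}(k, M|_{k[u_\lambda]})$ vanishes precisely when $M|_{k[u_\lambda]}$ is a free $k[u_\lambda]$-module.

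For (3), flatness of $A$ over $k[u_\lambda]$ makes the Hom-adjunction derive to $\Ext_A^n(Au_\lambda, k) \simeq \Ext_{k[u_\lambda]}^n(k, k) \simeq k$ for every $n \geq 0$, so the group is nonzero. To see injectivity of the restriction map I factor this adjunction isomorphism as
\[
\Ext_A^n(Au_\lambda, k) \xrightarrow{\tau_{A,\lambda}^n(Au_\lambda, k)} \Ext_{k[u_\lambda]}^n(Au_\lambda|_{k[u_\lambda]}, k) \xrightarrow{\,-\, \circ\, \iota\,} \Ext_{k[u_\lambda]}^n(k, k),
\]
where $\iota \colon k \to Au_\lambda|_{k[u_\lambda]}$ sends $1$ to $u_\lambda$; this $\iota$ is a $k[u_\lambda]$-map because a direct calculation with $u_\lambda w u_\lambda = (-1)^{|w|} w u_\lambda^2 = 0$ in $\Lambda$ and $hu_\lambda = -u_\lambda h$ in $A$ shows that $u_\lambda$ acts as zero on $Au_\lambda$ from the left. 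Since the composition is an isomorphism, the first arrow is injective in every degree.

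For (4), the factoring in (3) is $\Ext_A^*(k,k)$-linear (Yoneda composition commutes with restriction and with precomposition), where $\Ext_A^*(k,k)$ acts on $\Ext_{k[u_\lambda]}^*(k, k) \simeq k[z]$ via $\tau_{A,\lambda}^*(k,k)$. Because $k[z]$ is a domain with unit, the annihilator of $\Ext_A^*(Au_\lambda, k)$ in $\Ho(\C)$ is precisely $\Ker \tau_{A,\lambda}^{2*}(k, k)$, giving $V_A(Au_\lambda, k) = Z(\Ker \tau_{A,\lambda}^{2*}(k, k))$. The same analysis applies to the other simple $A$-module, the sign module $k^{\operatorname{sgn}}$, whose restriction to $k[u_\lambda]$ is again trivial: from $\Delta(u_\lambda) = u_\lambda \otimes 1 + h \otimes u_\lambda$ together with $u_\lambda \cdot k^{\operatorname{sgn}} = 0$, the tensor action $\eta \mapsto \eta \otimes k^{\operatorname{sgn}}$ restricts, under the adjunction identification, to Yoneda multiplication by $\tau_{A,\lambda}^*(\eta)$. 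Proposition \ref{prop:elementaryproperties}(3) then gives
\[
V_A(Au_\lambda) = V_A(Au_\lambda, k) \cup V_A(Au_\lambda, k^{\operatorname{sgn}}) = Z\bigl(\Ker \tau_{A,\lambda}^{2*}(k, k)\bigr),
\]
and irreducibility is immediate from Lemma \ref{lem:kernel}, which says the kernel is a prime ideal. The main obstacle I anticipate is this last compatibility for $k^{\operatorname{sgn}}$: since $k[u_\lambda]$ is not a Hopf subalgebra of $A$, one has to check by hand that the Hopf-algebra tensor action restricts correctly, which the coproduct calculation above resolves.
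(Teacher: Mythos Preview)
Your proof is correct, and in parts (3) and (4) it takes a genuinely different route from the paper's. For (1) and (2) both arguments rest on the short exact sequence $0 \to Au_\lambda \to A \to Au_\lambda \to 0$ together with Eckmann--Shapiro. One small remark on (1): your parity argument shows $Au_\lambda$ is not projective, but $\Omega_A^1(Au_\lambda) \simeq Au_\lambda$ strictly needs $Au_\lambda$ to have \emph{no projective summand}; this follows at once from $\dim_k Au_\lambda = 2^c = \dim_k A^{\pm}$, or more directly by observing that $u_\lambda \in \rad \Lambda$ forces $Au_\lambda \subseteq \rad A$, so that $A \twoheadrightarrow Au_\lambda$ is already the projective cover.

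For (3), the paper proves injectivity of restriction by writing down the explicit $k[u_\lambda]$-decomposition $Au_\lambda = k\langle u_\lambda \otimes e\rangle \oplus N$ and chasing a square relating $\stHom_A(Au_\lambda,k)$ to $\stHom_{k[u_\lambda]}(k,k)$. Your factorization of the Eckmann--Shapiro isomorphism as restriction followed by precomposition with $\iota$ is cleaner and bypasses this explicit splitting.

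The real divergence is in (4). The paper first decomposes $Au_\lambda = M_\lambda^+ \oplus M_\lambda^-$ into indecomposable periodic summands, invokes Proposition~\ref{prop:indecperiodic} to obtain irreducibility of $V_A(Au_\lambda) = V_A(M_\lambda^+)$, and then pins the variety down by a dimension argument: only the \emph{inclusion} $I_A(Au_\lambda,k) \subseteq \Ker\tau_{A,\lambda}^{2*}(k,k)$ is shown directly, and equality is forced because $V_A(Au_\lambda)$ is one-dimensional irreducible while $\Ker\tau_{A,\lambda}^{2*}(k,k)$ is a non-maximal prime. You instead compute the annihilator $I_A(Au_\lambda,S)$ \emph{exactly} for each simple $S$, using the $\Ext_A^{2*}(k,k)$-linearity of the adjunction and the coproduct check for $k^{\operatorname{sgn}}$, and conclude via Proposition~\ref{prop:elementaryproperties}(3). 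Your route is more self-contained and avoids the structure theory of indecomposable periodic objects; the paper's route has the side benefit of producing the explicit summands $M_\lambda^{\pm}$, which it reuses in the proof of Proposition~\ref{prop:rankvarieties}.
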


\begin{proof}
By \cite[Lemma 2.14]{BensonErdmannHolloway}, the sequence
$$\cdots \to \Lambda \xrightarrow{\cdot u_{\lambda}} \Lambda \xrightarrow{\cdot u_{\lambda}} \Lambda \xrightarrow{\cdot u_{\lambda}} \Lambda \to \cdots$$
of left $\Lambda$-modules is exact. Applying $A \ot_{\Lambda} -$, we obtain an exact sequence of left $A$-modules, since $A$ is free as a right $\Lambda$-module. The canonical isomorphism $A \ot_{\Lambda} \Lambda \simeq A$ then gives an exact sequence
\begin{equation*} \label{eqn:sequence}
\cdots \to A \xrightarrow{\cdot (u_{\lambda} \ot e)} A \xrightarrow{\cdot (u_{\lambda} \ot e)} A \xrightarrow{\cdot (u_{\lambda} \ot e)} A \to \cdots \tag{$\dagger$}
\end{equation*}
of left $A$-modules, hence $Au_{\lambda}$ is $1$-periodic. The last part of (1) follows from the isomorphisms
$$A \ot_{k[u_{\lambda}]} k \simeq A \ot_{k[u_{\lambda}]} k[ u_{\lambda} ]/( u_{\lambda} ) \simeq A / Au_{\lambda} = A / A(u_{\lambda} \ot e)$$
of left $A$-modules, together with the isomorphism $A / A(u_{\lambda} \ot e) \simeq A(u_{\lambda} \ot e)$ which is immediate from the exact sequence (\ref{eqn:sequence}).

For (2), we use the isomorphism from (1) together with the  Eckmann-Shapiro Lemma, and obtain
$$\stHom_A \left ( Au_{\lambda}, M \right ) \simeq \stHom_A \left ( A \ot_{k[u_{\lambda}]} k, M \right ) \simeq \stHom_{k[u_{\lambda}]} \left ( k,M \right )$$
Since the algebra $k[u_{\lambda}]$ is isomorphic to $k[y]/(y^2)$, the $k[u_{\lambda}]$-module $M$ is free if and only if it does not contain $k$ as a direct summand. Consequently, it is free if and only if $\stHom_{k[u_{\lambda}]} ( k,M ) =0$.

For (3), we use the periodicity of $Au_{\lambda}$ and the fact that $A$ is selfinjective to obtain
$$\Ext_A^n \left ( Au_{\lambda},k \right ) \simeq \stHom_A \left ( \Omega_A^n(Au_{\lambda}),k \right ) \simeq \stHom_A \left ( Au_{\lambda},k \right )$$
for every positive integer $n$. From (2) we see that $\stHom_A ( Au_{\lambda},k ) \neq 0$, and so $\Ext_A^n ( Au_{\lambda},k ) \neq 0$ as well.

For the restriction map, note first that since $A$ is free as a left $k[u_{\lambda}]$-module, the sequence (\ref{eqn:sequence}) restricts to a sequence of free $k[u_{\lambda}]$-modules. Therefore $\Omega_{k[u_{\lambda}]}^n ( Au_{\lambda} )$ is stably isomorphic to  $Au_{\lambda}$ for every $n \ge 1$, giving
$$\Ext_{k[u_{\lambda}]}^n \left ( Au_{\lambda},k \right ) \simeq \stHom_{k[u_{\lambda}]} \left ( \Omega_{k[u_{\lambda}]}^n(Au_{\lambda}),k \right ) \simeq \stHom_{k[u_{\lambda}]} \left ( Au_{\lambda},k \right )$$
The restriction map $\tau_{A, \lambda}^n(Au_{\lambda},k)$ is compatible with the isomorphisms $\Ext_A^n ( Au_{\lambda},k ) \simeq \stHom_A ( Au_{\lambda},k )$ and $\Ext_{k[u_{\lambda}]}^n ( Au_{\lambda},k ) \simeq \stHom_{k[u_{\lambda}]} ( Au_{\lambda},k )$, in the sense that the diagram
$$\xymatrix{
\Ext_A^n \left ( Au_{\lambda},k \right ) \ar[r] \ar[d]^{\tau_{A, \lambda}^n(Au_{\lambda},k)} & \stHom_A \left ( Au_{\lambda},k \right ) \ar[d]^{\tau} \\
\Ext_{k[u_{\lambda}]}^n \left ( Au_{\lambda},k \right ) \ar[r] & \stHom_{k[u_{\lambda}]} \left ( Au_{\lambda},k \right ) }$$
commutes, where the horizontal maps are the isomorphism, and $\tau$ the restriction. It therefore suffices to show that $\tau$ is injective.

The left $A$-module $Au_{\lambda}$ decomposes over $k[u_{\lambda}]$ as a direct sum $k \langle u_{\lambda} \ot e \rangle \oplus N$, where $k \langle u_{\lambda} \ot e \rangle$ denotes the $k$-vector space generated by the element $u_{\lambda} \ot e$. The latter is isomorphic to $k$ as a $k[u_{\lambda}]$-module. One now checks that the diagram
$$\xymatrix{
\stHom_A \left ( Au_{\lambda},k \right ) \ar[d]^{\tau} \ar[r] & \stHom_{k[u_{\lambda}]} \left ( k,k \right ) \ar@{^{(}->}[d] \\
\stHom_{k[u_{\lambda}]} \left ( Au_{\lambda},k \right ) \ar[r] & \stHom_{k[u_{\lambda}]} \left ( k \langle u_{\lambda} \ot e \rangle \oplus N,k \right ) }$$
commutes, where the lower horizontal map is the natural isomorphism, the upper one is the Eckmann-Shapiro isomorphism from the proof of (2) above, and the vertical map to the right is the inclusion into the summand corresponding to $k \langle u_{\lambda} \ot e \rangle$. This shows that $\tau$, and therefore also $\tau_{A, \lambda}^n(Au_{\lambda},k)$, is injective.

To prove (4), note first that $A$ decomposes as a direct sum $A = A^+ \oplus A^-$, with $A^+ = A (1 \ot (e+h))$ and $A^- = A (1 \ot (e-h))$, where $h$ is the generator of $C_2$. Similarly, one checks that $Au_{\lambda}$ decomposes as $M_{\lambda}^+ \oplus M_{\lambda}^-$, where 
$$M_{\lambda}^+ = \{ w u_{\lambda} \ot (e+h) \mid w \in \Lambda \}, \hspace{3mm} M_{\lambda}^- = \{ w u_{\lambda} \ot (e-h) \mid w \in \Lambda \}$$
As left $\Lambda$-modules, the modules $A^+$ and $A^-$ are isomorphic to $\Lambda$, hence they are indecomposable also as left $A$-modules (they represent the two isomorphism classes of indecomposable projective $A$-modules). As a consequence, the modules $M_{\lambda}^+$ and $M_{\lambda}^-$ must also be indecomposable. Now look at the exact sequence (\ref{eqn:sequence}). One checks that for $A^+$, the image of the multiplication map $\cdot (u_{\lambda} \ot e)$ is $M_{\lambda}^-$, with kernel $M_{\lambda}^+$ (and vice versa), so that $\Omega_A^1 ( M_{\lambda}^- ) = M_{\lambda}^+$. It now follows from Proposition \ref{prop:elementaryproperties} that 
$$V_A \left ( Au_{\lambda} \right ) = V_A \left ( M_{\lambda}^+ \right ) \cup V_A \left ( M_{\lambda}^- \right ) = V_A \left ( M_{\lambda}^+ \right )$$ 
and so since $M_{\lambda}^+$ is indecomposable, we see from Proposition \ref{prop:indecperiodic} that $V_A ( Au_{\lambda} )$ is irreducible.

\sloppy Let us first consider the support variety $V(Au_{\lambda},k)$, which by definition equals $Z( I_A(Au_{\lambda},k) )$, where $I_A(Au_{\lambda},k)$ is the (graded) annihilator ideal of $\Ext_A^*( Au_{\lambda},k )$ in $\Ext_A^{2*}( k,k )$. Let $\eta$ be a homogeneous element of $I_A(Au_{\lambda},k)$, and choose an element $\theta \in \Ext_A^2(Au_{\lambda},k)$ with $\tau_{A, \lambda}^*(Au_{\lambda},k)( \theta ) \neq 0$ in $\Ext_{k[u_{\lambda}]}^2 ( Au_{\lambda},k )$; this is possible by (3). Then $\eta \cdot \theta = 0$ in $\Ext_A^*( Au_{\lambda},k )$ since $\eta \in I_A(Au_{\lambda},k)$, giving
$$0 = \tau_{A, \lambda}^* \left ( Au_{\lambda},k \right ) \left ( \eta \cdot \theta \right ) = \tau_{A, \lambda}^{2*} \left ( k,k \right ) \left ( \eta \right ) \cdot \tau_{A, \lambda}^* \left ( Au_{\lambda},k \right ) \left ( \theta \right )$$
in $\Ext_{k[u_{\lambda}]}^n ( Au_{\lambda},k )$, where $\tau_{A, \lambda}^{2*} ( k,k )$ is the restriction map from $\Ext_A^{2*}(k,k)$ to $\Ext_{k[u_{\lambda}]}^{2*} ( k,k )$. We know that $\Ext_{k[u_{\lambda}]}^{2*} ( k,k )$ is just a polynomial ring of the form $k[y]$ with $y$ in degree two (see the start of the proof of Lemma \ref{lem:kernel}), and so if $\tau_{A, \lambda}^{2*} ( k,k )( \eta )$ were nonzero it would have to equal $\alpha y^t$ for some nonzero scalar $\alpha$. It is well known that multiplication by $y$ induces an isomorphism
$$\Ext_{k[u_{\lambda}]}^n ( X,k ) \xrightarrow{y \cdot} \Ext_{k[u_{\lambda}]}^{n+2} ( X,k )$$
for every $n \ge 1$ and every $k[u_{\lambda}]$-module $X$ (see, for example, \cite[pages 583-584]{PevtsovaWitherspoon}), and so since $\tau_{A, \lambda}^*(Au_{\lambda},k)( \theta ) \neq 0$, we see from the above equation that $\tau_{A, \lambda}^{2*} ( k,k )( \eta )$ cannot be nonzero in $\Ext_{k[u_{\lambda}]}^{2*} ( k,k )$. In other words, the element $\eta$ belongs to $\Ker \tau_{A, \lambda}^{2*} ( k,k )$, giving $I_A(Au_{\lambda},k) \subseteq \Ker \tau_{A, \lambda}^{2*} ( k,k )$, and then in turn
$$Z \left (  \Ker \tau_{A, \lambda}^{2*} ( k,k ) \right ) \subseteq Z \left ( I_A(Au_{\lambda},k) \right ) = V_A \left ( Au_{\lambda},k \right ) \subseteq V_A \left ( Au_{\lambda} \right ) \cap V_A \left ( k \right ) = V_A \left ( Au_{\lambda} \right )$$
where the last inclusion is Proposition \ref{prop:elementaryproperties}(2).

By definition, the support variety $V_A ( Au_{\lambda} )$ equals $Z( I_A(Au_{\lambda}) )$, where $I_A(Au_{\lambda})$ is the annihilator ideal of $\Ext_A^*( Au_{\lambda},Au_{\lambda} )$ in $\Ext_A^{2*}( k,k )$. The inclusion $Z (  \Ker \tau_{A, \lambda}^{2*} ( k,k ) ) \subseteq Z( I_A(Au_{\lambda}) )$ gives the inclusion
$$\sqrt{I_A(Au_{\lambda})} \subseteq \sqrt{\Ker \tau_{A, \lambda}^{2*} ( k,k )}$$
by \cite[Theorem 25]{Matsumura}, and so since $\Ker \tau_{A, \lambda}^{2*} ( k,k )$ is a prime ideal by Lemma \ref{lem:kernel}, we see that $I_A(Au_{\lambda}) \subseteq \Ker \tau_{A, \lambda}^{2*} ( k,k )$. We also know, from the same lemma, that $\Ker \tau_{A, \lambda}^{2*} ( k,k ) \neq \m_0$, so that the chain
$$\Ker \tau_{A, \lambda}^{2*} ( k,k ) \subset \m_0$$
of prime ideals containing $I_A(Au_{\lambda})$ has length one. Since the $A$-module $Au_{\lambda}$ is periodic, we know from Theorem \ref{thm:complexityone} that the dimension of $V_A ( Au_{\lambda} )$ is one. Moreover, we saw above that the support variety is irreducible, and so it follows that $V_A ( Au_{\lambda} ) = Z( \Ker \tau_{A,\lambda}^{2*}(k,k) )$; see the paragraph following Corollary \ref{cor:periodic}.
\end{proof}

We now use the properties we just proved for the modules $Au_{\lambda}$ to show that every non-trivial support variety contains $V_A( Au_{\lambda} )$ for some nonzero $\lambda$.

\begin{proposition}\label{prop:rankvarieties}
Let $M \in \mod A$ be a non-projective module. 

\emph{(1)} There exists a nonzero $c$-tuple $\lambda \in k^c$ with the property that $M$ is not a free module over the subalgebra $k[u_{\lambda}]$. Moreover, for every such $\lambda$, the support variety $V_A(M)$ contains the one-dimensional irreducible variety $V_A( Au_{\lambda} )$ from \emph{Proposition \ref{prop:periodictestmodule}}.

\emph{(2)} If $M$ is indecomposable and periodic, then there exists a nonzero $c$-tuple $\lambda \in k^c$ with the following property: given a nonzero $c$-tuple $\mu \in k^c$, the module $M$ is not free over $k[u_{\mu}]$ if and only if $\mu = \alpha \lambda$ for some \emph{(}necessarily nonzero\emph{)} scalar $\alpha \in k$. Moreover, $V_A(M) = V_A( Au_{\lambda} )$.
\end{proposition}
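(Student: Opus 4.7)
The plan is to prove (1) first and then deduce (2) from it.

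For the existence of a nonzero $\lambda$ in (1), I argue by contradiction. Suppose $M$ is free as a $k[u_\lambda]$-module for every nonzero $\lambda \in k^c$. By Dade's lemma for the exterior algebra $\Lambda$ (as in \cite{BensonErdmannHolloway}), $M$ is then projective over $\Lambda$. But Lemma~\ref{lem:restriction} says that the restriction map $\Ext_A^*(M,M) \to \Ext_\Lambda^*(M,M)$ is injective, and so $\Ext_A^n(M,M) = 0$ for all $n \ge 1$, forcing $M$ to be projective over $A$. This contradicts the hypothesis, so a nonzero $\lambda$ with $M$ not $k[u_\lambda]$-free must exist.

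For the inclusion $V_A(Au_\lambda) \subseteq V_A(M)$ in (1), fix such a $\lambda$. By Proposition~\ref{prop:periodictestmodule}(2), $\stHom_A(Au_\lambda,M) \neq 0$, and combined with the $1$-periodicity of $Au_\lambda$ (Proposition~\ref{prop:periodictestmodule}(1)) and the self-injectivity of $A$, dimension shifting yields $\Ext_A^n(Au_\lambda,M) \simeq \stHom_A(Au_\lambda,M) \neq 0$ for every $n \ge 1$. Theorem~\ref{thm:fgproperties}(7) then forces $V_A(Au_\lambda,M) \neq \{\m_0\}$, and Proposition~\ref{prop:elementaryproperties}(2) delivers a non-graded maximal ideal $\m \in V_A(Au_\lambda) \cap V_A(M)$. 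Let $\m^*$ denote the graded prime ideal generated by the homogeneous elements of $\m$. Since $V_A(Au_\lambda) = Z(\Ker \tau_{A,\lambda}^{2*}(k,k))$ is irreducible of dimension $1$ by Proposition~\ref{prop:periodictestmodule}(4), the chain $\Ker \tau_{A,\lambda}^{2*}(k,k) \subseteq \m^* \subsetneq \m_0$ inside the $1$-dimensional domain $\Ho(A)/\Ker \tau_{A,\lambda}^{2*}(k,k)$ forces $\m^* = \Ker \tau_{A,\lambda}^{2*}(k,k)$. As $I_A(M)$ is graded and contained in $\m$, it lies in $\m^*$, and hence $V_A(M) \supseteq Z(\Ker \tau_{A,\lambda}^{2*}(k,k)) = V_A(Au_\lambda)$.

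For (2), assume $M$ is indecomposable and periodic. By Proposition~\ref{prop:indecperiodic} together with Theorem~\ref{thm:complexityone}, $V_A(M)$ is irreducible of dimension $1$. Applying (1), choose $\lambda \neq 0$ with $M$ not $k[u_\lambda]$-free; then $V_A(Au_\lambda) \subseteq V_A(M)$ and both sides are irreducible of the same dimension, so they coincide. For the characterization of the admissible $\mu$: if $\mu = \alpha\lambda$ for a nonzero scalar $\alpha \in k$, then $k[u_\mu] = k[u_\lambda]$ and so $M$ fails to be $k[u_\mu]$-free. Conversely, if $M$ is not $k[u_\mu]$-free then (1) yields $V_A(Au_\mu) \subseteq V_A(M) = V_A(Au_\lambda)$, and equality of irreducible varieties of the same dimension forces $V_A(Au_\mu) = V_A(Au_\lambda)$; Lemma~\ref{lem:kernel} then gives $\mu = \alpha\lambda$ for some nonzero $\alpha \in k$.

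The main obstacle is the invocation of Dade's lemma in the existence step of (1): one must verify that the version cited applies in the present generality of a not-necessarily-algebraically-closed $k$ with $\car k \neq 2$. The remainder is careful bookkeeping with the machinery of Propositions~\ref{prop:periodictestmodule} and \ref{prop:elementaryproperties}, Theorem~\ref{thm:fgproperties}, and Lemma~\ref{lem:kernel}.
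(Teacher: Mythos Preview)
Your proof is correct and follows essentially the same route as the paper's: existence of $\lambda$ via the Dade-type result in \cite{BensonErdmannHolloway}, the inclusion $V_A(Au_\lambda)\subseteq V_A(M)$ via periodicity of $Au_\lambda$, nonvanishing of $\Ext_A^*(Au_\lambda,M)$, and one-dimensionality/irreducibility of $V_A(Au_\lambda)$. The only cosmetic difference is in (2), where the paper compares $M$ with the indecomposable periodic summand $M_\lambda^+$ of $Au_\lambda$ and appeals to Proposition~\ref{prop:indecperiodic}, whereas you use the containment from (1) together with a dimension count; your variant is marginally more direct and avoids the auxiliary module.
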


\begin{proof}
The first part of (1) follows from \cite[Section 3]{BensonErdmannHolloway}. Now take such a $c$-tuple $\lambda$. Then $\stHom_A( Au_{\lambda},M )$ is nonzero by Proposition \ref{prop:periodictestmodule}(2), and combining this with Proposition \ref{prop:periodictestmodule}(1), we obtain
$$\Ext_A^n \left ( Au_{\lambda},M \right ) \simeq \stHom_A \left ( \Omega_A^n(Au_{\lambda}),M \right ) \simeq \stHom_A \left ( Au_{\lambda},M \right ) \neq 0$$
for every $n \ge 1$. It now follows from Theorem \ref{thm:fgproperties}(7) that the support variety $V_A ( Au_{\lambda},M )$ is non-trivial, i.e.\ $V_A ( Au_{\lambda},M ) \neq \{ \m_0 \}$. The inclusion
$$V_A \left ( Au_{\lambda},M \right ) \subseteq V_A \left ( Au_{\lambda} \right ) \cap V_A \left ( M \right )$$
which holds by Proposition \ref{prop:elementaryproperties}(2), now implies that the intersection $V_A ( Au_{\lambda} ) \cap V_A ( M )$ is also non-trivial. But $V_A( Au_{\lambda} )$ is irreducible by Proposition \ref{prop:periodictestmodule}(4), and so $V_A( Au_{\lambda} ) \subseteq V_A(M)$. This proves (1).

To prove (2), suppose that $M$ is indecomposable and periodic, and let $\lambda$ be any nonzero $c$-tuple for which the module is not free over $k[u_{\lambda}]$; such a tuple exists by (1). Consider the module $M_{\lambda}^+$ from the proof of Proposition \ref{prop:periodictestmodule}(4). We showed that this module is indecomposable and periodic, and that its support variety equals that of $Au_{\lambda}$. We saw above that the intersection $V_A ( Au_{\lambda} ) \cap V_A ( M )$ is non-trivial, hence the same is trivially true for the intersection $V_A ( M_{\lambda}^+ ) \cap V_A ( M )$. It now follows from Proposition \ref{prop:indecperiodic} that $V_A(M) = V_A ( M_{\lambda}^+ ) = V_A ( Au_{\lambda} )$. 

Finally, if $\mu$ is another nonzero $c$-tuple for which $M$ is not a free $k[u_{\mu}]$-module, then what we have just shown implies that the support varieties $V_A ( Au_{\lambda} )$ and $V_A ( Au_{\mu} )$ must be equal. Then $Z( \Ker \tau_{A,\lambda}^{2*}(k,k) ) = Z( \Ker \tau_{A,\mu}^{2*}(k,k) )$ by Proposition \ref{prop:periodictestmodule}(4), giving in turn $\Ker \tau_{A,\lambda}^{2*}(k,k) = \Ker \tau_{A,\mu}^{2*}(k,k)$ since both ideals are prime ideals by Lemma \ref{lem:kernel}. The very same result gives $\mu = \alpha \lambda$ for some (nonzero) $\alpha \in k$. Conversely, if $\mu = \alpha \lambda$ for a nonzero $\alpha$, then $u_{\mu} = \alpha u_{\lambda}$. The subalgebra $k[u_{\mu}]$ then equals $k[u_{\lambda}]$, hence $M$ is not free over $k[u_{\mu}]$.
\end{proof}

\begin{remark}
The results of Propositions~\ref{prop:periodictestmodule} 
and~\ref{prop:rankvarieties} recall notions of rank varieties.
We will not use rank varieties here,
in favor of proceeding directly to the tensor product property.
However, the framework of rank varieties would be a  
natural structure in which to view our results in this section, defining 
the {\em rank variety} of an $A$-module $M$ to be the set of all
$\lambda\in k^c$ for which $M$ is not free as a $k[u_{\lambda}]$-module.
Equivalently, by Proposition~\ref{prop:periodictestmodule}(2),
this is the set of all $\lambda\in k^c$ such that
$\underline{\Hom}_A(Au_{\lambda},M)\neq 0$. 
By Proposition~\ref{prop:rankvarieties}(2), if $M$ is
indecomposable and periodic, then this set is simply the line through
the $c$-tuple $\lambda$ in the statement.
Compare with~\cite{BensonErdmannHolloway,BerghErdmann,PevtsovaWitherspoon},
with~\cite{DK,DS,GHSS} in characteristic~0, and with~\cite{BIKP} in odd
characteristic. 
\end{remark}

In the main result of this section, we consider 
general braided Hopf algebras of the form $\Lambda \rtimes G$, 
for $G$ a finite group containing $C_2$, and over an 
algebraically closed field $k$. 
We know from Remark \ref{rem:fg}(1) that when the characteristic 
of $k$ does not divide the order of $G$, then the finite tensor 
categories $\mod ( \Lambda \rtimes G)$ and $\mod ( \Lambda \rtimes C_2)$ 
satisfy \textbf{Fg}. 
The following lemma allows us to pass from support varieties 
over $\Lambda \rtimes G$ to support varieties over $\Lambda \rtimes C_2$.

\begin{lemma}\label{lem:reducingvarieties}
Let $k$ be an algebraically closed field, $c$ a positive integer, and $\Lambda$ the exterior algebra on $c$ generators over $k$. Furthermore, let $G$ be a finite group whose order is not divisible by the characteristic of $k$, acting on $\Lambda$ in such a way that the algebra $H = \Lambda \rtimes G$ is a Hopf algebra. Finally, suppose that $G$ contains a central subgroup $C_2$ of order two, acting on $\Lambda$ by letting its generator change the sign of the generators of $\Lambda$, and that $A = \Lambda \rtimes C_2$ is a Hopf subalgebra of $H$. Then 
$$V_H(M) = V_H(N) \hspace{2mm} \Longrightarrow \hspace{2mm} V_A(M) = V_A(N)$$
for all $M,N \in \mod H$.
\end{lemma}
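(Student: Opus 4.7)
The plan is to show that for every $M \in \mod H$ the support variety $V_A(M)$ is completely determined by $V_H(M)$ via the explicit formula
$$V_A(M) = \pi^{-1}(V_H(M)),$$
where $\pi\colon \Maxspec \Ho(\mod A) \to \Maxspec \Ho(\mod H)$ is induced by the restriction map on cohomology rings. Once this identity is in hand, the implication in the lemma is immediate, since the right-hand side depends only on $V_H(M)$.

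First I would pin down the relevant map of cohomology rings. Combining Remark \ref{rem:fg}(3) with Remark \ref{rem:evendegrees}, and using that $C_2$ acts trivially on the even cohomology of $\Lambda$, the restrictions from $H$ and from $A$ to $\Lambda$ identify $R := \Ho(\mod H) \cong \Ext_{\Lambda}^{2*}(k,k)^{G/C_2}$ and $S := \Ho(\mod A) \cong \Ext_{\Lambda}^{2*}(k,k)$. Thus the restriction $R \to S$ is the inclusion of $G/C_2$-invariants inside a polynomial ring on which $G/C_2$ acts linearly. By Lemma \ref{lem:restriction} it is injective, and the extension $R \subseteq S$ is finite and integral; consequently $\pi$ is surjective with fibers equal to the $G/C_2$-orbits in $\Maxspec S$.

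The key algebraic step is the identity
$$I_H(M) = I_A(M) \cap R$$
for every $M \in \mod H$. Because $\Ext^*(M,M)$ contains the identity endomorphism, an element of the scalar ring annihilates the whole $\Ext$-algebra precisely when it is sent to zero by the structure map; hence $I_H(M) = \ker \varphi_M^H$ and $I_A(M) = \ker \varphi_M^A$. Naturality of restriction makes the square
$$\xymatrix{
R \ar[r]^{\varphi_M^H} \ar@{^{(}->}[d] & \Ext_H^*(M,M) \ar@{^{(}->}[d] \\
S \ar[r]^{\varphi_M^A} & \Ext_A^*(M,M)
}$$
commute, with both vertical arrows injective by Lemma \ref{lem:restriction}; a diagram chase yields the identity. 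From this, the inclusion $\pi(V_A(M)) \subseteq V_H(M)$ is immediate, and the reverse inclusion follows by applying lying-over to the integral extension $R/I_H(M) \hookrightarrow S/I_A(M)$: for each $\p \in V_H(M)$ one lifts $\p/I_H(M)$ to a maximal ideal of $S/I_A(M)$, producing a maximal $\m \supseteq I_A(M)$ of $S$ with $\m \cap R = \p$. Thus $V_H(M) = \pi(V_A(M))$.

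It remains to observe that $V_A(M)$ is $G/C_2$-stable. Since $M$ is an $H$-module its identity endomorphism is $G$-fixed, and the $G$-action on $\Ext_A^*(M,M)$ is compatible with the $G$-action on $S$; a short calculation shows that $I_A(M)$ is a $G$-invariant ideal of $S$, so $V_A(M)$ is $G/C_2$-stable in $\Maxspec S$. Combined with the orbit description of the fibers of $\pi$, this forces $V_A(M) = \pi^{-1}(\pi(V_A(M))) = \pi^{-1}(V_H(M))$, and applying the formula to both $M$ and $N$ proves the lemma. The main (mild) obstacle is the lying-over step for the inclusion $V_H(M) \subseteq \pi(V_A(M))$; everything else reduces to diagram chasing together with standard invariant theory of finite group actions on polynomial rings.
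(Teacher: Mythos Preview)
Your proposal is correct and leads to a slightly stronger conclusion than the paper states, namely the explicit formula $V_A(M)=\pi^{-1}(V_H(M))$. The overall architecture is the same as the paper's: both arguments rest on the commutative square relating $\varphi_M^H$ and $\varphi_M^A$ via restriction (yielding $I_H(M)=I_A(M)\cap R$), together with the $G$-invariance of the ideal $I_A(M)$ in $S$. The difference is in how the passage from $R$ to $S$ is organized. The paper argues by contradiction: given $\mathfrak m\in V_A(M)\setminus V_A(N)$, it uses prime avoidance to pick $\eta\in I_A(N)$ missing every ${}^g\mathfrak m$, forms the norm $w=\prod_{g\in G}{}^g\eta\in I_A(N)\cap R$, and checks that its preimage in $R$ lies in $I_H(N)$ but not in $(\tau_{H,A}^{2*})^{-1}(\mathfrak m)$. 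Your route replaces this hand-made norm argument with the standard package for finite group quotients (the extension $R=S^{G/C_2}\subseteq S$ is integral, $\pi$ is surjective with fibres equal to $G/C_2$-orbits, and lying-over applies to $R/I_H(M)\hookrightarrow S/I_A(M)$). Since the classical proof that fibres over $\Spec S^{\Gamma}$ are $\Gamma$-orbits is exactly a norm argument, the two proofs are really the same computation viewed at different levels of abstraction; yours is cleaner and yields the preimage formula directly.

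One point deserves more than ``a short calculation'': the $G$-invariance of $I_A(M)$. You assert that the $G$-action on $\Ext_A^*(M,M)$ is compatible with that on $S$ via $\varphi_M^A$, but this needs the observation (which the paper checks) that the action of $G$ on $A$ given by ${}^g(w_1\otimes e + w_2\otimes h)={}^gw_1\otimes e+{}^gw_2\otimes h$ is well defined because $C_2$ is central in $G$, and in fact is by Hopf algebra automorphisms, so that twisting is monoidal: ${}_g(E\otimes_k M)\cong {}_gE\otimes_k {}_gM$. The paper sidesteps the bookkeeping by identifying $I_A(M)$ with the annihilator of $\Ext_A^*(k,M^*\otimes M)$ and then computing directly that $\theta\circ({}^g\eta)={}^g(({}^{g^{-1}}\theta)\circ\eta)=0$; you should either do this or spell out the equivariance of $\varphi_M^A$, since without it the orbit-stability step does not go through.
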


\begin{proof}
We know from Remark \ref{rem:fg}(3) that the cohomology ring $\Ext_H^*(k,k)$ is isomorphic to $\Ext_{\Lambda}^*(k,k)^{G}$ via the restriction map
$$\Ext_H^{*}(k,k) \xrightarrow{\tau_{H, \Lambda}^*(k,k)} \Ext_{\Lambda}^{*}(k,k)$$
This map is the composite
$$\Ext_H^{*}(k,k) \xrightarrow{\tau_{H, A}^*(k,k)} \Ext_{A}^{*}(k,k) \xrightarrow{\tau_{A, \Lambda}^*(k,k)} \Ext_{\Lambda}^{*}(k,k)$$
and from Remark \ref{rem:evendegrees} we also know that
$$\Ext_{A}^{2*}(k,k) \xrightarrow{\tau_{A, \Lambda}^{2*}(k,k)} \Ext_{\Lambda}^{2*}(k,k)$$
is an isomorphism. Since $C_2 \subseteq G$, both $\Ext_H^{*}(k,k)$ and $\Ext_A^{*}(k,k)$ are concentrated in even degrees. 

By definition, the action of $G$ on $\Lambda$ is defined in terms of a group homomorphism $G \to \Aut ( \Lambda )$. Now for an element $a = w_1 \ot e + w_2 \ot h$ in $A$ and $g \in G$, we define ${^ga}$ to be ${^gw_1} \ot e + {^gw_2} \ot h$. One checks that this induces an automorphism of $A$, using the fact that $C_2$ is central in $G$. Moreover, in this way we obtain a homomorphism $G \to \Aut (A)$, with the action of $G$ on $A$ extending the action on $\Lambda$. As in Remark \ref{rem:fg}(2), we obtain a $G$-action on $\Ext_{A}^{*}(k,k)$, and this action commutes with the restriction map (and isomorphism)
$$\Ext_{A}^{2*}(k,k) \xrightarrow{\tau_{A, \Lambda}^{2*}(k,k)} \Ext_{\Lambda}^{2*}(k,k)$$
Then $\Ext_{\Lambda}^{2*}(k,k)^G$ is the image of $\Ext_{A}^{2*}(k,k)^G$, and so  $\Ext_H^{2*}(k,k)$ is isomorphic to $\Ext_{A}^{2*}(k,k)^{G}$ via the restriction map
$$\Ext_H^{2*}(k,k) \xrightarrow{\tau_{H, A}^{2*}(k,k)} \Ext_{A}^{2*}(k,k)$$
in light of the above.

Let $M$ and $N$ be $H$-modules with $V_H(M) = V_H(N)$. There is a  commutative diagram
$$\xymatrix@C=50pt{
\Ext_H^{2*}(k,k) \ar[d]^{\varphi_M^H} \ar[r]^{\tau_{H,A}^{2*}} & \Ext_{A}^{2*}(k,k) \ar[d]^{\varphi_M^A} \\
\Ext_{H}^* (M,M) \ar[r]^{\tau_{H, A}^{*}(M,M)} & \Ext_{A}^* (M,M)  }$$
where the horizontal maps are restrictions (we have skipped the arguments in the upper one, since we shall be using it quite a lot in what follows), and the vertical maps are induced by tensoring with $M$. The module $N$ gives rise to a similar diagram. By Lemma \ref{lem:restriction}, the horizontal restriction maps are injective. Denote by $I_A(M)$ the annihilator ideal of $\Ext_{A}^* (M,M)$ in $ \Ext_{A}^{2*}(k,k)$, that is, $I_A(M) = \Ker \varphi_M^A$, and similarly for $I_A(N), I_H(M)$ and $I_H(N)$. These are the ideals defining the four support varieties we are considering.

Suppose we can show that $I_A(M)$ and $I_A(N)$ are $G$-invariant in $\Ext_{A}^{2*}(k,k)$, so that ${^gI_A(M)} = I_A(M)$ and ${^gI_A(N)} = I_A(N)$ for all $g \in G$. Let $\m \in V_A(M)$; thus $\m$ is a maximal ideal of $\Ext_{A}^{2*}(k,k)$ with $I_A(M) \subseteq \m$. Since $k$ is algebraically closed and the algebras $\Ext_H^{2*}(k,k)$ and $\Ext_A^{2*}(k,k)$ are finitely generated, the ideal $( \tau_{H,A}^{2*} )^{-1} ( \m )$ is maximal in $\Ext_H^{2*}(k,k)$; see, for example, \cite[Section 5.4]{Benson}. The commutativity of the diagram gives $I_H(M) \subseteq ( \tau_{H,A}^{2*} )^{-1} ( \m )$, and therefore $( \tau_{H,A}^{2*} )^{-1} ( \m ) \in V_H(M)$. Suppose, on the other hand, that $\m \notin V_A(N)$, so that $I_A(N)\nsubseteq \m$. As $I_A(N)$ is $G$-invariant, this gives $I_A(N) \nsubseteq {^g\m}$ for every $g \in G$, and so by prime avoidance there exists a homogeneous element $\eta \in I_A(N)$ with $\eta \notin {^g\m}$ for every $g \in G$. Consider now the element
$$w = \prod_{g \in G} {^g\eta}$$
It belongs to $I_A(N)$ since $\eta$ is one of the factors, but it cannot belong to $\m$; if it did, then ${^g\eta}$ would belong to $\m$ for some $g$, giving $\eta \in {^{g^{-1}}\m}$. Furthermore, this element is $G$-invariant, and therefore belongs to the image of $\tau_{H,A}^{2*}(k,k)$, i.e.\ $w = \tau_{H,A}^{2*} ( \theta )$ for some $\theta \in \Ext_H^{2*}(k,k)$. 

The commutativity of the diagram with $M$ replaced by $N$ gives 
$$\tau_{H, A}^{*}(N,N) \circ \varphi_N^H ( \theta ) = \varphi_N^A \circ \tau_{H,A}^{2*} ( \theta ) = \varphi_N^A(w) = 0$$
since $w \in I_A(N)$, and so since $\tau_{H, A}^{*}(N,N)$ is injective we obtain $\theta \in I_H(N)$. Now $\theta$ does not belong to $( \tau_{H,A}^{2*} )^{-1} ( \m )$, for if it did, then $w$ would be an element of $\m$. Therefore $I_H(N) \nsubseteq ( \tau_{H,A}^{2*} )^{-1} ( \m )$, so that $( \tau_{H,A}^{2*} )^{-1} ( \m ) \notin V_H(N)$. But $( \tau_{H,A}^{2*} )^{-1} ( \m ) \in V_H(M)$ from above, and $V_H(M) = V_H(N)$ by assumption, and so we have reached a contradiction. It must therefore be the case that $\m \in V_A(N)$, giving $V_A(M) \subseteq V_A(N)$. The reverse inclusion is proved similarly, hence $V_A(M) =  V_A(N)$.

It only remains to show that the ideals $I_A(M)$ and $I_A(N)$ are $G$-invariant in $\Ext_{A}^{2*}(k,k)$. We prove this for $I_A(M)$; the proof for $I_A(N)$ is similar. It follows from \cite[Theorem 9.3.9]{Witherspoon} that $I_A(M)$ equals the annihilator ideal of the $\Ext_{A}^{2*}(k,k)$-module $\Ext_A^*(k, M^* \ot M)$, where the module action is given in terms of Yoneda composition. Now let $\eta$ and $\theta$ be homogeneous elements of $I_A(M)$ and $\Ext_A^*(k, M^* \ot M)$, respectively. Given any $H$-module $X$ and an element $g \in G$, the twisted $A$-module ${_gX}$ is isomorphic to $X$, with an isomorphism ${_gX} \to X$ mapping an element $m$ to $(1 \ot g^{-1})m$. Consequently, when we twist a homogeneous element of $\Ext_A^*(k, M^* \ot M)$ by an element from $G$, we obtain a new element in $\Ext_A^*(k, M^* \ot M)$, since $k$ and $M^* \ot M$ are $H$-modules. Therefore, as $\eta$ belongs to $I_A(M)$, we obtain
$$\theta \circ ( {^g\eta} ) = ( {^{gg^{-1}}\theta} ) \circ ( {^g\eta} ) = {^g \left ( ( {^{g^{-1}}\theta} ) \circ \eta \right )} = 0$$
since ${^{g^{-1}}\theta}$ belongs to $\Ext_A^*(k, M^* \ot M)$. This shows that ${^g\eta} \in I_A(M)$, and hence $I_A(M)$ is $G$-invariant.
\end{proof}

We now prove the main result of this section: the tensor product property holds for support varieties over braided Hopf algebras of the form we have been considering.

\begin{theorem}\label{thm:skewgroupalgs}
Let $k$ be an algebraically closed field, $c$ a positive integer, and $\Lambda$ the exterior algebra on $c$ generators over $k$. Furthermore, let $G$ be a finite group whose order is not divisible by the characteristic of $k$, acting on $\Lambda$ in such a way that the algebra $H = \Lambda \rtimes G$ is a braided Hopf algebra. Finally, suppose that $G$ contains a central subgroup $C_2$ of order two, acting on $\Lambda$ by letting its generator change the sign of the generators of $\Lambda$, and that $\Lambda \rtimes C_2$ is a Hopf subalgebra of $H$. Then 
$$V_H \left ( M \ot N \right ) = V_H \left ( M \right ) \cap V_H \left ( N \right )$$
for all $M,N \in \mod H$.
\end{theorem}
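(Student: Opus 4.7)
The plan is to combine the reductions of Theorems \ref{thm:main} and \ref{thm:mainalternative} with the rank-variety theory developed in Propositions \ref{prop:periodictestmodule} and \ref{prop:rankvarieties}. Since $\mod H$ is a finite braided tensor category satisfying \textbf{Fg} by Remark \ref{rem:fg}(1), criterion (5) of Theorem \ref{thm:mainalternative} reduces the problem to showing that for every pair of nonzero indecomposable periodic $M, N \in \mod H$ with $V_H(M) = V_H(N)$, the tensor product $M \ot N$ is non-projective over $H$.

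Fix such $M$ and $N$. By Lemma \ref{lem:reducingvarieties} the equality $V_A(M) = V_A(N)$ holds over $A = \Lambda \rtimes C_2$; since $H$ is free over $A$ and $[G:C_2]$ is invertible in $k$, standard restriction/induction arguments in the style of Reiten--Riedtmann identify projectivity over $H$ with projectivity over $A$, so $M$ and $N$ remain non-projective as $A$-modules. Decomposing $M|_A$ and $N|_A$ into indecomposable summands, the non-projective summands are periodic over $A$, and Proposition \ref{prop:rankvarieties}(2) attaches to each a unique line $[\nu] \subseteq k^c$ for which its support variety equals the irreducible one-dimensional component $V_A(Au_\nu)$. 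By Proposition \ref{prop:periodictestmodule}(4) together with Lemma \ref{lem:kernel}, distinct lines yield distinct irreducible components, so the equality $V_A(M) = V_A(N)$ of unions of irreducible components forces the underlying collections of lines to coincide. In particular, there exists a common nonzero $\lambda \in k^c$ with neither $M$ nor $N$ free over $k[u_\lambda]$.

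Next, restrict further to the Hopf subalgebra $A_\lambda = k\langle u_\lambda, h\rangle \subseteq A$, which is isomorphic to the four-dimensional Sweedler algebra. Since $A_\lambda$ is free over $k[u_\lambda]$ with basis $\{1,h\}$, both $M|_{A_\lambda}$ and $N|_{A_\lambda}$ are non-projective. The Sweedler algebra has finite representation type, with its only non-projective indecomposables being the two one-dimensional simples $k^+$ and $k^-$, distinguished by the sign of the $h$-action. A direct computation using $\Delta(u_\lambda) = u_\lambda \ot 1 + h \ot u_\lambda$ yields $k^{\epsilon} \ot k^{\epsilon'} \simeq k^{\epsilon \epsilon'}$, which is again non-projective. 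Expanding $M|_{A_\lambda} \ot N|_{A_\lambda}$ into simple-plus-projective pieces and absorbing every contribution involving a projective summand via Remark \ref{rem:properties}(1), we isolate a nonzero direct sum of $k^\pm$; hence $M \ot N$ is non-projective over $A_\lambda$. Finally, since $A$ is free as an $A_\lambda$-module (with basis the square-free monomials in the remaining exterior generators) and $H$ is free over $A$, non-projectivity propagates up to $H$, completing the proof.

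The principal obstacle will be the common-line argument in the middle paragraph. The modules $M$ and $N$ are indecomposable and periodic only as $H$-modules, so passing to Proposition \ref{prop:rankvarieties}(2) requires decomposing $M|_A$ and $N|_A$ summand by summand and matching their irreducible components via the injectivity of the assignment $[\nu] \mapsto V_A(Au_\nu)$ supplied by Proposition \ref{prop:periodictestmodule}(4) and Lemma \ref{lem:kernel}.
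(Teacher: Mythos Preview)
Your proposal is correct and follows essentially the same route as the paper: reduce via Theorem~\ref{thm:mainalternative}(5) to nonzero indecomposable periodic modules with equal varieties, pass to $A = \Lambda \rtimes C_2$ via Lemma~\ref{lem:reducingvarieties} and freeness of $H$ over $A$, extract a common line $\lambda$ using Proposition~\ref{prop:rankvarieties}, and then verify non-projectivity of $M \ot N$ over the Sweedler Hopf subalgebra $k[u_\lambda] \rtimes C_2$. The only cosmetic difference is that you match the entire collections of lines arising from the indecomposable $A$-summands (using the injectivity of $[\lambda] \mapsto V_A(Au_\lambda)$), whereas the paper simply picks a single pair $M_i$, $N_j$ with $V_A(M_i) \cap V_A(N_j) \neq \{\m_0\}$ and invokes Proposition~\ref{prop:indecperiodic}; both lead to the same common $\lambda$.
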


\begin{proof}
As before, denote by $A$ the Hopf subalgebra $\Lambda \rtimes C_2$ of $H$. Let $M$ and $N$ be two nonzero periodic $H$-modules with $V_H(M) = V_H(N)$, and decompose them as $A$-modules into direct sums $M = \oplus M_i$, $N = \oplus_j N_j$ of indecomposable modules. Since $H$ is free as a left $A$-module (see \cite[Theorem 7]{NZ}), both $M$ and $N$ are of complexity at most one over $A$, because the projective resolutions over $H$ restrict to projective resolutions over $A$. Moreover, the modules cannot be projective over $A$; if $M$, say, is $A$-projective, then it is also projective -- and hence free -- over $\Lambda$, since $A$ is free over $\Lambda$. Then we would obtain a free module when we induced $M$ (as a $\Lambda$-module) back to $H$, but as in the proof of Lemma \ref{lem:restriction}, the original $H$-module $M$ is a summand of this induced module. As $M$ is not projective over $H$, it must be the case that it is not projective over $A$ either. Therefore both $M$ and $N$ are of complexity one over $A$. In particular, at least one of the $M_i$, and one of the $N_j$, is not projective, and therefore periodic from Corollary \ref{cor:periodic}.

By Lemma \ref{lem:reducingvarieties} there is an equality $V_A(M) = V_A(N)$, and by Theorem \ref{thm:fgproperties}(2) these support varieties are non-trivial since $M$ and $N$ are not projective over $A$. Consequently, by Proposition \ref{prop:elementaryproperties}(1), there exist indices $i$ and $j$ for which $V_A(M_i) \cap V_A(N_j) \neq \{ \m_0 \}$, where $\m_0$ is the graded maximal ideal of $\Ext_{A}^{2*}(k,k)$. Using Theorem \ref{thm:fgproperties}(2) again, we see that $M_i$ and $N_j$ are not projective, and therefore periodic from the above. It now follows from Proposition \ref{prop:indecperiodic} that $V_A(M_i) = V_A(N_j)$, and so from Proposition \ref{prop:rankvarieties} we see that there exists a nonzero $c$-tuple $\lambda \in k^c$ with $V_A(M_i) = V_A(N_j) = V_A(Au_{\lambda})$, and with $M_i$ and $N_j$ not free over the subalgebra $k[u_{\lambda}]$ of $A$. Then $M$ and $N$ are not free over $k[u_{\lambda}]$, either.

Since $u_{\lambda}$ is just a linear combination of the elements $x_1, \dots, x_c \in \Lambda$, the group $C_2$ acts on $k[u_{\lambda}]$, and we may form the four-dimensional skew group algebra $H_4^{\lambda} = k[u_{\lambda}] \rtimes C_2$. This is a Hopf subalgebra of $A$ (and therefore also of $H$), isomorphic to the Sweedler Hopf algebra $H_4$, and it contains $k[u_{\lambda}]$ as a subalgebra. Since it is free over $k[u_{\lambda}]$, the modules $M$ and $N$ cannot be projective as $H_4^{\lambda}$-modules, for if they were, then they would also be free over $k[u_{\lambda}]$. 

The algebra $H_4^{\lambda}$ has two simple modules, namely the trivial module $k$ and a module $S$. The latter is one-dimensional, with $u_{\lambda}S =0$, and $h$ acting as $-1$ (we identify $H_4^{\lambda}$ with a $k$-algebra with basis $1, u_{\lambda}, h$ and $hu_{\lambda}$, where $h$ is the generator of $C_2$). It is well-known that these are the only non-projective indecomposable $H_4^{\lambda}$-modules (see, for example, \cite[Page 467]{Cibils} or \cite[Corollary 2.4 and Theorem 2.5]{COZ}), and so it follows that there are elements $m \in M$ and $n \in N$ that generate summands isomorphic to either $k$ or $S$ when we restrict $M$ and $N$ to $H_4^{\lambda}$. Let $W$ be the one-dimensional subspace of $M \ot N$ generated by $m \ot n$. This is an $H_4^{\lambda}$-submodule of $M \ot N$; the comultiplication on $H_4^{\lambda}$ maps $u_{\lambda}$ to $u_{\lambda} \ot 1 + h \ot u_{\lambda}$ and $h$ to $h \ot h$, so that $u_{\lambda}$ acts as zero on $W$, and $h$ as $1$ or $-1$. Therefore, over $H_4^{\lambda}$, the module $M \ot N$ has a direct summand isomorphic to either $k$ or $S$. In particular, $M \ot N$ is not projective as an $H_4^{\lambda}$-module. Now since $H_4^{\lambda}$ is a Hopf subalgebra of $H$, we know from \cite[Theorem 7]{NZ} that $H$ is free as an $H_4^{\lambda}$-module. This implies that $M \ot N$ cannot be projective over $H$, for it it were, then it would also be projective over $H_4^{\lambda}$.

We have shown that for every pair of nonzero periodic $H$-modules whose support varieties coincide, the tensor product is not projective. It therefore follows from Theorem \ref{thm:mainalternative} that $V_H(M \ot N) = V_H(M) \cap V_H(N)$ for all $H$-modules $M$ and $N$.
\end{proof}

By Deligne's famous classification theorem (see \cite{Deligne2}), every symmetric finite tensor category over an algebraically closed field of characteristic zero is equivalent to the category of finite dimensional representations of some affine supergroup scheme. This means precisely that such a category is equivalent to $\mod H$, where $H$ is a Hopf algebra of the form $\Lambda \rtimes G$ for some exterior algebra $\Lambda$ and finite group $G$. Furthermore, there is a subgroup of $G$ of order two, and all the assumptions in Theorem \ref{thm:skewgroupalgs} are satisfied (see \cite{AndruskiewitschEtingofGelaki} and also \cite[Section 7.1]{NP}). Thus we obtain the following corollary of Theorem~\ref{thm:skewgroupalgs}, giving a different approach to~\cite[Corollary 3.2.4]{DK}. 

\begin{corollary}\label{thm:mainsymmetric}
Suppose that $( \C, \ot, \unit )$ is a symmetric finite tensor category over an algebraically closed field of characteristic zero. Then
$$\VC(X \ot Y) = \VC(X) \cap \VC(Y)$$
for all objects $X,Y \in \C$.
\end{corollary}

%%%%%%%%%%%%%%%%%%%%%%%%%%%%%%%%%%%%%%%%%%%%%%%%%%

%%%%%%%%%%%%%%%%%%%%%%%%%%%%%%%%%%%%%%%%%%%%%%%%

\end{document}